\newtheorem{thm}{Theorem}[section]
\newtheorem{lem}[thm]{Lemma}
\newtheorem{cor}[thm]{Corollary}
\theoremstyle{definition}
\newtheorem{defn}[thm]{Definition}
\newtheorem{rem}[thm]{Remark}
 \newcommand{\R}{\mathbb{R}}
\newcommand{\Ess}{\textsc{Essence}} \newcommand{\Conn}{\textsc{Conn}}
\newcommand{\size}[1]{\ensuremath{\vert #1 \vert}}
\newcommand{\perimeter}{\text{\sf Perimeter}}
\newcommand{\weight}{\text{\sf Weight}}
\begin{document}
\title{Windmills and extreme $2$-cells}
\author[J.~McCammond]{Jon McCammond$\ \!{ }^1$}
      \address{Dept. of Math.\\
               University of California, Santa Barbara\\
               Santa Barbara, CA 93106}
      \email{jon.mccammond@math.ucsb.edu}
\author[D.~Wise]{Daniel Wise$\ \!{ }^2$}
      \address{Dept. of Math.\\
               McGill University \\
               Montreal, Quebec, Canada H3A 2K6 }
      \email{wise@math.mcgill.ca}
\subjclass[2000]{20F06,20F67,57M07.}
\keywords{Coherent, locally quasiconvex}
\date{\today}

\begin{abstract}
  In this article we prove new results about the existence of
  $2$-cells in disc diagrams which are extreme in the sense that they
  are attached to the rest of the diagram along a small connected
  portion of their boundary cycle.  In particular, we establish
  conditions on a $2$-complex $X$ which imply that all minimal area
  disc diagrams over $X$ with reduced boundary cycles have extreme
  $2$-cells in this sense.  The existence of extreme $2$-cells in disc
  diagrams over these complexes leads to new results on coherence
  using the perimeter-reduction techniques we developed in an earlier
  article.  Recall that a group is called coherent if all of its
  finitely generated subgroups are finitely presented.  We illustrate
  this approach by showing that several classes of one-relator groups,
  small cancellation groups and groups with staggered presentations
  are collections of coherent groups.
\end{abstract}

\maketitle \footnotetext[1]{Partially supported by grants from the
  NSF} \footnotetext[2]{Research supported by grants from NATEQ and
  NSERC.}

In this article we prove some new results about the existence of
extreme $2$-cells in disc diagrams which lead to new results on
coherence.  In particular, we combine the diagram results shown here
with the theorems from \cite{McCammondWiseCoherence} to establish the
coherence of various classes of one-relator groups, small cancellation
groups, and groups with relatively staggered presentations.  The
article is organized as follows: \S~\ref{sec:defs} contains background
definitions, \S~\ref{sec:perimeter} recalls how extreme $2$-cells lead
to perimeter reductions and to coherent fundamental groups,
\S~\ref{sec:windmill} introduces the concept of a windmill,
\S~\ref{sec:extreme} uses windmills to prove that extreme $2$-cells
exist, and finally \S~\ref{sec:apps} uses extreme $2$-cells to prove
that various groups are coherent. For instance, we obtain the
following special case of Corollary~\ref{cor:t-windmills}:

\begin{cor}
  Let $G = \langle a_1, \dots, a_r,t\mid W^N \rangle$ where $W$ has
  the form
  \[t^{\epsilon_1}W_1 t^{\epsilon_2} W_2 \ldots t^{\epsilon_k}W_k,\]
  $N$ is arbitrary, and for each $i$, $\epsilon_i$ is a nonzero
  integer and $W_i$ is a reduced word in the $a_i$.  Suppose that
  $\{W_1, W_2, \dots, W_k\}$ freely generate a subgroup of the free
  group $\langle a_1, \dots, a_r\mid - \rangle$.  Then $G$ is
  coherent.
\end{cor}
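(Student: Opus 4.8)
The strategy is to recognize the presentation $2$-complex of $G$ as a complex carrying a windmill, and then to quote the two mechanisms the paper has set up: first, that a complex with a windmill has extreme $2$-cells in all of its minimal-area disc diagrams with reduced boundary (\S\ref{sec:extreme}), and second, that extreme $2$-cells drive the perimeter-reduction argument of \cite{McCammondWiseCoherence} recalled in \S\ref{sec:perimeter} and hence force $\pi_1$ to be coherent. Concretely, the plan is to check that $G$, with its distinguished letter $t$ and the alternating word $W$, meets the hypotheses of Corollary~\ref{cor:t-windmills}, the ``$t$-windmill'' criterion of which the stated corollary is advertised as a special case.

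Let $X$ be the standard $2$-complex of the presentation: one vertex, one oriented edge for each of $a_1,\dots,a_r,t$, and a single $2$-cell $R$ whose attaching map reads $W^N$ (one may assume $W$ is not a proper power, absorbing any power into $N$). I would first record the coarse combinatorics of $\partial R$: reading around the boundary one sees $N$ cyclic repetitions of the pattern $t^{\epsilon_1}W_1\cdots t^{\epsilon_k}W_k$, so $\partial R$ decomposes into \emph{$a$-arcs} (subpaths labelled by reduced words in $a_1,\dots,a_r$, each a fragment of some cyclic conjugate of a $W_i$) separated by \emph{$t$-arcs} (subpaths labelled by a nonzero power of $t$). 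Since each $\epsilon_i\neq 0$ and $W_k t^{\epsilon_1}$ involves no cancellation, $W$ is cyclically reduced, so $R$ is embedded enough to begin the windmill construction; one then sets up $X$ (subdividing along the $t$-edges if the windmill bookkeeping prefers it) so that the \Conn\ and \Ess\ data of \S\ref{sec:windmill} can be read off the pattern above.

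The heart of the argument is the windmill itself: one grows a subcomplex — of the universal cover, or of an arbitrary disc diagram — by attaching copies of $R$ one at a time, each glued to what has been built along a single connected boundary arc, maintaining simple-connectivity together with the \weight\ and \Ess\ inequalities that the windmill axioms demand. Here the hypothesis that $\{W_1,\dots,W_k\}$ freely generates a subgroup of the free group $\langle a_1,\dots,a_r\mid -\rangle$ is exactly the input one needs: it guarantees that whenever two copies of $R$ meet along $a$-edges, the meeting locus is forced to be a connected fragment arising from a common prefix or suffix of a single $W_i^{\pm1}$ — there is no ``accidental'' identification letting the overlap break into pieces or close up into a loop — while the interposed $t$-arcs prevent two distinct $a$-blocks from being glued to one another. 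In short, the free-basis condition is precisely what rules out the collapses that would violate \Conn\ and keep a blade from being added cleanly, and it persists as the windmill is enlarged because $F(W_1,\dots,W_k)$ embeds in the free group on the $a_j$. Granting this, every newly attached blade exposes the rest of its boundary, which is large relative to its connected attaching arc, and the windmill produces an extreme $2$-cell.

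I expect this middle step — translating ``$W_1,\dots,W_k$ is a free basis'' into the precise assertion that overlaps of copies of $R$ along $a$-arcs are connected and controlled, stably under enlargement of the windmill — to be the main obstacle; the steps on either side of it are invocations of results already in place. With an extreme $2$-cell guaranteed in every minimal-area diagram with reduced boundary, the perimeter-reduction machinery of \S\ref{sec:perimeter} applies: for any finitely generated $H\le G$ one passes to the corresponding cover of $X$, restricts to a compact core, and repeatedly attaches $2$-cells of $X$ along large connected arcs, each attachment strictly decreasing the \perimeter; the process terminates, yielding a compact complex with fundamental group $H$, so $H$ is finitely presented and $G$ is coherent. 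The clause ``$N$ arbitrary'' causes no difficulty, since the windmill construction treats $W^N$ as a single relator and is insensitive to whether or not there is torsion.
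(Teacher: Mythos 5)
Your overall skeleton does match the paper's: the statement is exactly the special case of Corollary~\ref{cor:t-windmills} in which the free-basis hypothesis supplies the injectivity condition, and the endgame (extreme $2$-cells feeding the perimeter-reduction criterion of Theorem~7.6 of \cite{McCammondWiseCoherence}, with the weight concentrated on the $t$-sides so that $\perimeter(t)=\weight(W^N)$) is the intended one. But the step you yourself flag as ``the main obstacle'' is both misformulated and missing, and it is the only step that actually needs an argument. A windmill is not something you grow by attaching copies of $R$ one at a time subject to ``\weight\ and \Ess\ inequalities'' --- no such axioms appear in the paper. The windmill $Z$ here is the fixed subcomplex of $X$ produced once and for all by the $\eth A$ construction of Definition~\ref{def:eA} (arcs pushed through the interior of the single $2$-cell parallel to the $a$-blocks $W_i$), and the only nontrivial hypothesis to verify is that $\Ess(\eth A)\hookrightarrow X$ is $\pi_1$-injective, i.e.\ that $Z$ is a \emph{splitting} windmill. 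Your proposed reformulation --- that overlaps of copies of $R$ along $a$-arcs in diagrams are connected fragments of a single $W_i^{\pm1}$, ``stably under enlargement'' --- is not the statement the machinery requires, is not obviously implied by the free-basis hypothesis, and you do not prove it; the diagram-level consequences are already handled once and for all by Theorem~\ref{thm:forest} and its corollaries, which you should simply quote.

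The verification you are missing is short, and is essentially the ``important special case'' already recorded in Definition~\ref{def:injective}. The graph $\Ess(\eth A)$ has at most two vertices (the endpoints of the collapsed $t$-edge) and one arc for each block $W_i$, the passage to the essence being exactly what discards the $N$-fold repetition coming from the proper power $W^N$ (without it, two parallel copies of the same $W_i$-arc would give an essential loop with trivial image). A reduced edge-loop in $\Ess(\eth A)$ therefore spells a reduced word in the letters $W_1^{\pm1},\dots,W_k^{\pm1}$, which is nontrivial in the free group on $a_1,\dots,a_r$ precisely because $\{W_1,\dots,W_k\}$ is a free basis of the subgroup it generates; hence $\Ess(\eth A)\to A$ is $\pi_1$-injective. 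Since every $\epsilon_i\neq 0$, the relator contains $t$, so the Freiheitssatz (Theorem~\ref{thm:frei}) makes $A\hookrightarrow X$ $\pi_1$-injective, and composing gives the splitting condition; moreover each $2$-cell is a windmill with respect to $Z$ because its boundary contains $Nk\geq 2$ nontrivial $a$-blocks (the degenerate case $Nk=1$ gives a free, hence coherent, group and can be set aside). With that in hand, Theorem~\ref{thm:extreme} and its translation Theorem~\ref{thm:extreme-cell} give the restricted spelling statement, and Corollaries~\ref{cor:AB-case} and~\ref{cor:t-windmills} finish exactly as you indicate; none of your cyclic-reduction, non-proper-power, or subdivision caveats are needed for that reduction.
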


\section{Basic Definitions}\label{sec:defs}

In this section we review some basic definitions about $2$-complexes
and diagrams.

\begin{defn}[Combinatorial maps and complexes]\label{def:combinatorial}
  A map $Y\rightarrow X$ between CW complexes is \emph{combinatorial}
  if its restriction to each open cell of $Y$ is a homeomorphism onto
  an open cell of $X$.  A CW complex $X$ is \emph{combinatorial}
  provided that the attaching map of each open cell of $X$ is
  combinatorial for a suitable subdivision.  All complexes and maps
  considered in this article will be combinatorial after suitable
  subdivisions.  In addition, we will only consider $2$-complexes in
  which the attaching maps of $2$-cells are immersions.
\end{defn}

\begin{defn}[Polygon]
  A \emph{polygon} is a $2$-dimensional disc whose cell structure has
  $n$~$0$-cells, $n$~$1$-cells, and one $2$-cell where $n\geq 1$ is a
  natural number.  If $X$ is a combinatorial $2$-complex then for each
  open $2$-cell $C\hookrightarrow X$ there is a polygon $R$, a
  combinatorial map $R\rightarrow X$ and a map $C \rightarrow R$ such
  that the diagram
  \[\begin{array}{ccc}
    C & \hookrightarrow & X\\
    \downarrow &\nearrow  & \\
    R &          &\\
  \end{array}\]
  commutes, and the restriction $\partial R \rightarrow X$ is the
  attaching map of $C$.  In this article the term \emph{$2$-cell} will
  always mean a combinatorial map $R\rightarrow X$ where $R$ is a
  polygon. The corresponding \emph{open $2$-cell} is the image of the
  interior of $R$.

  A similar convention applies to $1$-cells.  Let $e$ denote the graph
  with two $0$-cells and one $1$-cell connecting them.  Since
  combinatorial maps from $e$ to $X$ are in one-to-one correspondence
  with the characteristic maps of $1$-cells of $X$, we will often
  refer to a map $e \rightarrow X$ as a {\em $1$-cell} of $X$.
\end{defn}

Technical difficulties with $2$-complexes often arise because of the
existence of redundant $2$-cells and $2$-cells attached by proper
powers.

\begin{defn}[Redundant $2$-cells]
  Let $X$ be a $2$-complex.  If $R$ and $S$ are distinct $2$-cells in
  $X$ with identical boundary cycles then $R$ and $S$ are called
  \emph{redundant $2$-cells}.  More specifically, there must exist a
  combinatorial map $R \to S$ so that $\partial R \hookrightarrow R
  \to S \to X$ agrees with the map $\partial R \hookrightarrow R \to
  X$.
\end{defn}

\begin{defn}[Exponent of a $2$-cell]\label{def:exponent}
  Let $X$ be a $2$-complex, and let $R \rightarrow X$ be one of its
  $2$-cells.  Let $n$ be the largest number such that the map
  $\partial R \rightarrow X$ can be expressed as a path $W^n$ in $X$,
  where $W$ is a closed path in $X$.  This number $n$, which measures
  the periodicity of the map of $\partial R\rightarrow X$, is the
  \emph{exponent} of $R$, and a path such as $W$ is a \emph{period}
  for $\partial R$.  Notice that any other closed path which
  determines the same cycle as $W$ will also be a period of $\partial
  R$.  If the exponent $n$ is greater than $1$, then the $R$ is said
  to be \emph{attached by a proper power}.
\end{defn}

\begin{defn}[Disc Diagrams]
  A \emph{disc diagram} $D$ is a finite non-empty contractible
  $2$-complex together with a specific embedding of $D$ in $\R^2$.  A
  disc diagram which consists of a single $0$-cell is called
  \emph{trivial}.  If it is homeomorphic to a disc then it is
  \emph{non-singular}.  It is a fundamental result in combinatorial
  group theory that the image of a closed (combinatorial) loop $P\to
  X$ is null-homotopic if and only if there is a disc diagram $D\to X$
  having $P$ as its boundary cycle \cite{LySch77}.
\end{defn}

\begin{defn}[Area]
  Let $X$ be a $2$-complex and let $D\to X$ be a disc diagram.  The
  \emph{area of $D$} is simply the number of $2$-cells it contains.
  Since area is a non-negative integer, for every closed loop $P\to X$
  whose image is null-homotopic, there is a minimal area disc diagram
  $D\to X$ having $P$ as its boundary cycle.
\end{defn}

\begin{defn}[Cancellable pair]\label{def:cancellable}
  Let $X$ be a $2$-complex, let $D\to X$ be a disc diagram and let
  $R_1$ and $R_2$ be distinct $2$-cells in $D$.  If (1) $\partial R_1$
  and $\partial R_2$ are lifts of the same loop in $X$, (2) $\partial
  R_1 \cap \partial R_2$ contains a vertex $v$ and (3) the closed path
  $\partial R_1$ can be read counterclockwise starting at $v$ and the
  closed path $\partial R_2$ can be read clockwise starting at $v$ so
  that they have identical images in $X$, then $R_1$ and $R_2$ are
  called a \emph{cancellable pair}.  The definition of a cancellable
  pair is often restricted to the case where $\partial R_1$ and
  $\partial R_2$ contain a $1$-cell in common, but this restriction is
  actually unnecessary.
\end{defn}

\begin{rem}[Redundant cells and proper powers]
  The focus of Definition~\ref{def:cancellable} is on $\partial R_1$
  and $\partial R_2$ (rather than $R_1$ and $R_2$ themselves) because
  of the possibility of redundant $2$-cells and $2$-cells attached by
  proper powers.  If $R$ and $S$ are redundant $2$-cells and $D\to X$
  is a disc diagram containing a $2$-cell $R'$ which maps to $R$, then
  the map $D\to X$ can be modified so that $R'$ is sent to $S$ while
  keeping the rest of the map fixed.  Similarly, if $R$ is a $2$-cell
  in $X$ with exponent $n$ and $D\to X$ is a disc diagram containing a
  $2$-cell $R'$ which is sent to $R$, then there are $n$ distinct ways
  of sending $R'$ to $R$ while keeping the rest of the map fixed.
  Moreover, these modifications do not fundamentally change the basic
  properties of the disc diagram.
\end{rem}

We will need the following lemma about minimal area diagrams.  Its
proof is standard and will be omitted. The basic idea is that $R_1$
and $R_2$ can be ``cut out'' and the resulting hole can be ``sewn
up'', but there are a few technicalities.  See \cite{Ol91} or
\cite{McWi-fl} for complete details.

\begin{lem}\label{lem:minimal}
  Let $X$ be a $2$-complex and let $D\to X$ be a disc diagram.  If $D$
  contains a cancellable pair then $D$ does not have minimal area.
\end{lem}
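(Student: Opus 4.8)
The plan is to follow the classical ``cut and sew'' argument, using the cancellable pair to strictly decrease the number of $2$-cells while preserving the boundary cycle of $D\to X$.

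First I would reduce to the better-known situation in which $\partial R_1$ and $\partial R_2$ share a $1$-cell, not merely a vertex $v$. Given the cancellable pair $(R_1,R_2)$ meeting at $v$, condition (3) says that reading $\partial R_1$ counterclockwise from $v$ and $\partial R_2$ clockwise from $v$ yields the same path $W$ in $X$ (where $W$ is a period of the common boundary loop, traversed some number of times according to the exponent). The idea is to ``rotate'' the diagram near $v$: one can slide $R_2$ around so that its first $1$-cell is identified, in the diagram, with the first $1$-cell of $R_1$ emanating from $v$. More carefully, since $X$ is combinatorial and the attaching maps are immersions, the initial $1$-cell $e_1$ of the counterclockwise reading of $\partial R_1$ at $v$ and the initial $1$-cell $e_2$ of the clockwise reading of $\partial R_2$ at $v$ are lifts of the same $1$-cell of $X$. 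I would modify $D$ by folding $e_2$ onto $e_1$; this is a free face collapse / identification that does not change the map to $X$, does not change the boundary cycle, and does not increase the area. After this reduction we have two $2$-cells $R_1,R_2$ in a (possibly new) disc diagram $D'$ sharing at least one $1$-cell, with $\partial R_1$ read counterclockwise and $\partial R_2$ read clockwise agreeing in $X$.

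Next, with $R_1$ and $R_2$ sharing a $1$-cell and ``mirror'' boundary readings, I would perform the standard surgery: delete the open $2$-cells $R_1$ and $R_2$ and the open $1$-cells and $0$-cells of $\partial R_1\cap\partial R_2$ that become free, producing a hole bounded by a loop that reads, going one way around, the path $W$ followed by $W^{-1}$ (up to the shared portion). Because the two boundary readings are inverse to each other in $X$, the resulting boundary loop of the hole is null-homotopic in the trivial way, and one can fold/sew the two copies of $W$ together, identifying the arc of $\partial R_1$ not shared with $R_2$ to the corresponding arc of $\partial R_2$. This yields a disc diagram $D''\to X$ with the same boundary cycle as $D$ but with $\operatorname{area}(D'')\le\operatorname{area}(D)-2<\operatorname{area}(D)$. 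The technicalities here — that the result is still a contractible planar $2$-complex, that nothing outside a neighborhood of $R_1\cup R_2$ is disturbed, and that the embedding in $\mathbb R^2$ can be maintained — are exactly the points the paper defers to \cite{Ol91} and \cite{McWi-fl}.

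The main obstacle, and the reason the statement is nontrivial despite being ``standard,'' is the possibility of redundant $2$-cells and $2$-cells attached by proper powers, together with the fact that $R_1$ and $R_2$ are only assumed to meet at a vertex rather than along an arc. The proper-power issue means $W$ may be a proper power $U^n$ and the ``mirror'' matching in condition (3) must be tracked as a matching of the full loops, not just of a fundamental period; the redundancy issue means the cells one cuts out need not literally map to the same $2$-cell of $X$, only to $2$-cells with the same boundary cycle, so the folding in the sewing step uses the combinatorial map between redundant cells rather than strict equality. Handling the vertex-only intersection requires the initial rotation step above to be carried out carefully so that no unwanted identifications are forced elsewhere in $D$, i.e.\ so that $D'$ remains a legitimate disc diagram. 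Once these bookkeeping points are in place, the area strictly drops, contradicting minimality, which is the desired conclusion.
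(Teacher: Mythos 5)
Your overall outline is the standard cut--and--sew argument that the paper itself invokes (and defers to \cite{Ol91} and \cite{McWi-fl}), but the specific preliminary step you propose --- ``rotating'' the diagram by folding the edge $e_2$ of $\partial R_2$ onto the edge $e_1$ of $\partial R_1$ so as to reduce to the case where $R_1$ and $R_2$ share a $1$-cell --- is a step that can fail. When $R_1$ and $R_2$ meet only at the vertex $v$, the corners of $R_1$ and $R_2$ at $v$ need not be adjacent in the link of $v$: there may be other edges and $2$-cells of $D$ lying between $e_1$ and $e_2$ in the cyclic (or linear) order around $v$. Folding $e_1$ and $e_2$ together then produces an edge which has $R_1$ and $R_2$ on two of its sides \emph{and} the intervening cells on further sides, i.e.\ an edge with more than two incident $2$-cell corners, and it pinches the intervening subdiagram off along a single edge. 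Such an object is not embeddable in $\R^2$ and is not a disc diagram, so the claim that the fold ``does not change the boundary cycle and does not increase the area'' while keeping $D'$ a legitimate diagram is not justified; you flag the danger yourself but the reduction, as stated, cannot in general be carried out.

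The standard proof does not need this reduction, and the paper's definition of cancellable pair is set up precisely so that a shared vertex suffices: one removes both open $2$-cells at once, and since they meet at $v$ the resulting single hole has boundary reading $WW^{-1}$ (traverse $\partial R_1$ counterclockwise from $v$, then $\partial R_2$ clockwise from $v$, these having identical images in $X$ by condition (3)); one then sews the hole shut by folding its boundary onto itself starting at $v$, matching the two copies of $W$ edge by edge. The genuine technicalities live here: the successive identifications may force vertices or edges of $D$ that were distinct to coincide, which can pinch off sphere-like pieces or violate planarity, and the classical remedies are $0$-refinement (Ol'shanskii) or discarding the pinched-off components, neither of which increases area --- so the area still drops by at least $2$ and minimality is contradicted. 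Your remarks about redundant $2$-cells and proper powers are consistent with the paper's conventions (condition (3) matches the full boundary loops, not just a period), so the only substantive repair needed is to replace the fold-first reduction with the direct removal of the wedge $R_1\cup R_2$ and the sewing of the $WW^{-1}$ hole.
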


\section{Perimeter reductions}\label{sec:perimeter}

As mentioned in the introduction, the main goal of this article is to
use structures we call ``windmills'' (introduced in the next section)
to force disc diagrams to contain extreme $2$-cells.  Once this fact
is known in a particular context, the machinery constructed in
\cite{McCammondWiseCoherence} can be used to conclude that the
corresponding fundamental groups are coherent.  In this short section,
we briefly review the main ideas and results from
\cite{McCammondWiseCoherence} and very briefly explain the connection
between the existence of extreme $2$-cells and coherent fundamental
groups.

Let $Y$ be a subcomplex of a $2$-complex $X$.  The \emph{perimeter} of
$Y$ in $X$ is essentially the length of the boundary of an
$\epsilon$-neighborhood of $Y$ in $X$, under the assumption that the
$1$-cells of $X$ have unit length.  For example, the perimeter of a
single edge $e$ is just the number of sides of $2$-cells of $X$ that
are attached to $e$.  Alternatively, the perimeter of $Y$ in $X$ is
the total number of missing sides, where a side of a $2$-cell in $X$
is \emph{missing} if it is attached to a $1$-cell in $Y$ but it is not
a side of a $2$-cell in $Y$.  There is also a weighted version where
the sides of the $2$-cells of $X$ are given non-negative weights
(subject to minor restrictions).  The \emph{weighted perimeter} of $Y$
in $X$ is then the sum of the weights of the missing sides.

The main idea of \cite{McCammondWiseCoherence} is to use perimeter
calculations to force the termination of the following algorithm.  Let
$X$ be $2$-complex with a finitely generated fundamental group and let
$Y$ be a compact subcomplex of $X$ such that the induced map $\pi_1 Y
\to \pi_1 X$ is onto.  Note that such a $Y$ always exists since we can
use the union of closed loops representing a finite generating set.
At this point the map from $\pi_1 Y$ to $\pi_1 X$ may or may not be
$\pi_1$-injective.  If it is, then $\pi_1 Y = \pi_1 X$ and the
compactness of $Y$ implies that $\pi_1 X$ is finitely presented.  If
this map is not $\pi_1$-injective then it is natural to focus
attention on a closed loop $P \to Y \subset X$ that is essential in
$Y$ and null-homotopic in $X$.  Being null-homotopic in $X$ there is a
disc diagram $D \to X$ with $P$ as its boundary cycle and being
essential in $Y$ there is at least one $2$-cell of $D$ that is not in
$Y$.  If we enlarge $Y$ by adding the $2$-cells from $D$, then this
new complex has a fundamental group that still maps onto $\pi_1 X$, it
is still compact and it is closer to being $\pi_1$-injective.  In
general, this process of enlargement might need to happen infinitely
many times.

If, however, all the disc diagrams over $X$ always have $2$-cells
where most of their boundary cycle is contained in the closed loop
$P$, then it is at least conceivable that we can guarantee the
existence of a $2$-cell in $D$ whose addition to $Y$ results in a
larger subcomplex with a smaller (weighted) perimeter.  Under such
conditions, the iterative procedure described above must stop since at
each stage the non-negative integral perimeter of the resulting
subcomplexes is steadily decreasing, and when it stops, $\pi_1 Y' =
\pi_1 X$ and the compactness of $Y'$ implies that $\pi_1 X$ is
finitely presented as above.

Many variations on this proof-scheme are described in
\cite{McCammondWiseCoherence} along with precise definitions and
statements of the results.  In this article we focus on producing
$2$-complexes for which every disc diagram has an extreme $2$-cell.
The conclusion that the corresponding fundamental groups are coherent
will follow from the fact that in the contexts described weights can
be found so that the hypotheses of Theorem~7.6 of
\cite{McCammondWiseCoherence} are satisfied.

\section{Windmills}\label{sec:windmill}

In this section we introduce a particular type of (weak) subcomplex of
a $2$-complex that we call a windmill.  These structures will be used
to force the existence of extreme $2$-cells in disc diagrams.

\begin{defn}[Subcomplexes]\label{def:subcomplexes}
  Let $X$ and $Y$ be $2$-complexes and let $Y \hookrightarrow X$ be a
  topological embedding.  If $X$ and $Y$ can be subdivided so that
  $Y\hookrightarrow X$ is combinatorial, then we will call $Y$ a
  \emph{subcomplex} of $X$ even though its image is not a subcomplex
  in the original cell structure of $X$.  We will use the term {\em
    true subcomplex} if $Y\subset X$ is a subcomplex in the
  traditional sense - without subdivisions.  Finally, given a
  subcomplex $Y$ in $X$, the closure of $X\setminus Y$ will be another
  subcomplex that we will call its \emph{complement}.
\end{defn}

The fact that the image of $Y$ need not be a subcomplex of $X$ in the
traditional sense could have been avoided if we had assumed at the
start that $X$ and $Y$ were already suitably subdivided.  We will
not, however, carry out such subdivisions since the cell structures of
$X$ and $Y$ carry information of interest in applications.  In fact
we will mostly be interested in the other extreme: subcomplexes where
the image of $Y^1$ is, in some sense, transverse to $X^1$.

\begin{figure}[ht]\centering
  \includegraphics[width=.3\textwidth]{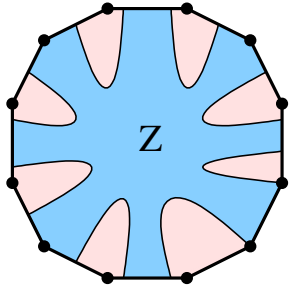}
  \caption{A windmill configuration in a
    $2$-cell.}\label{fig:windmill}
\end{figure}

\begin{defn}[Windmills]\label{def:windmill}
  Let $X$ be a $2$-complex, let $Y \hookrightarrow X$ be a subcomplex
  and let $Z \hookrightarrow X$ be its complement subcomplex, and let
  $\Gamma = Y\cap Z$ be the subgraph of $X$ which separates them.  If
  $\phi:R\to X$ is a $2$-cell of $X$, then we will say $R$ is a
  windmill with respect to $Z$ if, roughly speaking, $\phi^{-1}(Z)$
  looks like a windmill.  An example is shown in
  Figure~\ref{fig:windmill}.  The dark portion of this $2$-cell
  belongs to $\phi^{-1}(Z)$ and there are eight $1$-cells in its
  interior which separate the light and dark areas.  Other examples
  are shown in Figures~\ref{fig:shading} and~\ref{fig:bc}.

  The precise definition we will use goes as follows: $R$ is a
  \emph{windmill with respect to $Z$} if $\phi^{-1}(Z \setminus
  \Gamma)$ is connected and $\phi^{-1}(\Gamma)$ is homeomorphic to a
  collection of isolated points in $\partial R$ plus $n\geq 2$
  disjoint closed $1$-cells whose endpoints lie in $\partial R$ and
  whose interiors lie entirely in the interior of $R$.  If each
  $2$-cell of $X$ is a windmill with respect to $Z$, then $Z$ is a
  \emph{windmill in $X$}.
\end{defn}

Note that if $Z$ is a windmill in $X$ then $\Gamma\cap X^1$ is a
finite set of points.  We will now give two concrete methods of
creating windmills which we will need for our applications in
Section~\ref{sec:apps}.

\begin{defn}[$\partial A$]\label{def:pA}
  Let $X$ be a connected $2$-complex and let $A$ be a true subcomplex
  of $X^1$. Let $Y$ be the closure of a regular neighborhood of $A$,
  let $Z$ denote the complementary subcomplex, and let $\partial A$
  denote the intersection $\Gamma$ of $Y$ and $Z$.  Then $Z$ is a
  windmill if $\phi^{-1}(A)$ is disconnected for each $2$-cell
  $\phi:R\to X$.  For example, if $X$ consists of a single hexagonal
  $2$-cell and $A$ consists of five of its $0$-cells and one of its
  $1$-cells, then the windmill created by $\partial A$ is shown in
  Figure~\ref{fig:shading}.
\end{defn}

\begin{figure}[ht]\centering
  \includegraphics[width=.3\textwidth]{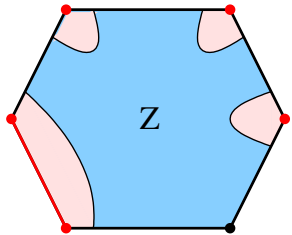}
  \caption{A windmill created by $\partial A$.}\label{fig:shading}
\end{figure}

Our second construction is similar.

\begin{defn}[$\eth A$]\label{def:eA}
  Let $X$ be a connected $2$-complex, let $A$ and $B$ be true
  subcomplexes of $X^1$ such that $(A \cup B) = X^1$ and $(A\cap B)
  \subset X^0$.  We will now define $\eth A$ and simultaneously define
  $Y$ and $Z$ so that they extend $A$ and $B$ respectively.  Define
  the vertices of $\eth A$ to be the $0$-cells in $A\cap B$.  Suppose
  $R\to X$ is a $2$-cell.  If $\partial R\to X$ only contains
  $1$-cells from $A$ then $R$ will also be a $2$-cell of $Y$ and if
  $\partial R$ only contains $1$-cells from $B$ then $R$ will belong
  to $Z$.  Finally, if it contains $1$-cells from $A$ and $B$, then
  the boundary cycle $\partial R\to X$ can be uniquely partitioned
  into non-trivial paths which alternate between paths in $A$ and
  paths in $B$.  For each non-trivial path in $A$, we add an edge to
  $\eth A$ which starts and ends at the endpoints of this path and
  runs parallel to it through the interior of $R$.  The regions of $R$
  thus created which border $1$-cells from $A$ will belong to $Y$ and
  the unique remaining region will belong to $Z$.  This procedure will
  create a windmill $Z$ if $\phi^{-1}(A)$ has more than one
  non-trivial component for each $2$-cell $\phi:R\to X$.  If $X$
  consists of a single hexagonal $2$-cell, $A$ contains three of its
  $1$-cells (the two leftmost $1$-cells and the $1$-cell in the upper
  right) and $B$ contains the other three, then the windmill created
  by $\eth A$ is shown in Figure~\ref{fig:bc}.
\end{defn}

\begin{figure}[ht]\centering
  \includegraphics[width=.3\textwidth]{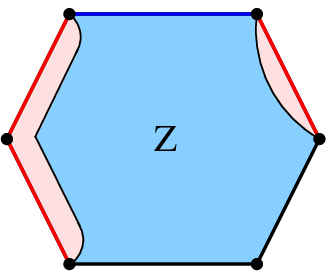}
  \caption{A windmill created by $\eth A$.}\label{fig:bc}
\end{figure}

\begin{rem}[$\partial A$ versus $\eth A$]
  Despite their similar definitions, in general, neither $\partial
  A\hookrightarrow X$ nor $\eth A\hookrightarrow X$ is homotopic to a
  subgraph of the other.  To pass from $\partial A\hookrightarrow X$
  to $\eth A\hookrightarrow X$ requires shrinking some ``trivial''
  loops and identifying distinct vertices.  Moreover, these
  definitions will lead to independent applications.
\end{rem}

The windmills of primary interest will be those where a particular
inclusion map is $\pi_1$-injective.  When $X$ has no redundant
$2$-cells and no $2$-cells attached by proper powers (and $\Gamma$ is
the subgraph which separates a windmill from its complement), we will
require that the inclusion $\Gamma\hookrightarrow X$ be
$\pi_1$-injective.  In the general case, we will only need to focus on
a particular portion of $\Gamma$ that we call its essence.

\begin{defn}[Essence of a subgraph]
  Let $X$ be a $2$-complex and let $\Gamma\hookrightarrow X$ be the
  subgraph which separates a windmill in $X$ from its complement.  If
  $\Gamma$ partitions redundant $2$-cells $R$ and $S$ in similar ways,
  then the portion of $\Gamma$ in $R$ and the portion in $S$ perform
  similar functions in disc diagrams over $X$ and we will not need
  both.  Similarly, if $R$ is a $2$-cell with exponent $n>1$ and the
  windmill-like structure in $R$ respects this $n$-fold symmetry, then
  we will only need ``$\frac1n$-th'' of $\Gamma \cap R$.  Both types
  of redundancies may occur in $\partial A$ and in $\eth A$.  These
  two observations define an equivalence relation on the $1$-cells of
  $\Gamma$.  Let $\Ess(\Gamma)\hookrightarrow X$ be a graph in $X$
  which results from picking one $1$-cell from each equivalence class.
  In the end the exact choice of $1$-cells is irrelevant since, if
  $\Gamma'\hookrightarrow X$ and $\Gamma''\hookrightarrow X$ are any
  two possibilities for $\Ess(\Gamma)\hookrightarrow X$ then $\Gamma'$
  and $\Gamma''$ are homeomorphic and and the maps are homotopic.  To
  see the homotopy, note that distinct choices of representative
  $1$-cells can be pushed to the same path in $X^1$ while keeping
  their endpoints fixed.
\end{defn}

\begin{defn}[Splitting windmills]\label{def:splitting}
  Let $X$ be a $2$-complex and let $\Gamma$ be the subgraph which
  separates a windmill $Z$ in $X$ from its complement.  If the
  embedding $\Ess(\Gamma)\hookrightarrow X$ is $\pi_1$-injective on
  each connected component then $Z$ is a \emph{splitting windmill}.
\end{defn}

\section{Extreme $2$-Cells}\label{sec:extreme}

In this section we prove that minimal area disc diagrams over
$2$-complexes with splitting windmills have $2$-cells which are
extreme in the sense that they are attached to the rest of $D$ along a
very small portion of their boundary cycle
(Theorem~\ref{thm:extreme}).  The key property of splitting windmills
that enables the proof is that they partition minimal area disc
diagrams in a very restrictive manner.  Given any map to $X$ (such as
a disc diagram) we can pull back the partitioning of $X$ determined by
the windmill and its complement to define a partitioning of the
domain.  Recall that a graph with no cycles is a \emph{forest}, a
connected forest is a \emph{tree}, and a vertex of valence~$1$ is a
\emph{leaf}.

\begin{thm}[Forest]\label{thm:forest}
  Let $X$ be a $2$-complex and let $\Gamma\hookrightarrow X$ be the
  subgraph which separates a splitting windmill from its complement.
  If $P\to X$ is a non-trivial null-homotopic immersed combinatorial
  path and $\psi:D\to X$ is a minimal area disc diagram having $P$ as
  its boundary cycle, then $\Gamma' = \psi^{-1}(\Gamma)$ is a forest
  and every leaf in $\Gamma'$ lies in $\partial D$.
\end{thm}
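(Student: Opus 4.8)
The plan is to argue by contradiction on two fronts: first that $\Gamma' = \psi^{-1}(\Gamma)$ contains a cycle, and second that it has an interior leaf; in both cases I would produce a cancellable pair of $2$-cells in $D$, contradicting minimality via Lemma~\ref{lem:minimal}. The starting observation is that pulling back the windmill structure through $\psi$ gives each $2$-cell $R'$ of $D$ the windmill shape of its image: $R'$ is cut by $n_{R'} \ge 2$ arcs of $\Gamma'$ into light and dark regions, with $\psi^{-1}(Z\setminus\Gamma) \cap R'$ connected. Consequently $\Gamma'$ is a properly embedded graph in the disc $D$ whose edges are the interior arcs of the $2$-cells, meeting $\partial D$ only in the finite point-set $\psi^{-1}(\Gamma \cap X^1)$. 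Because $D$ is simply connected, any cycle in $\Gamma'$ bounds a subdiagram $E \subseteq D$, and I would take $E$ innermost (minimal area among subdiagrams bounded by cycles of $\Gamma'$).

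The heart of the argument is that a splitting windmill lets us ``develop'' $\Gamma'$ along $\Ess(\Gamma)$. Reading around the boundary cycle of the innermost subdiagram $E$, we traverse a sequence of arcs of $\Gamma'$, each lying inside a single $2$-cell and each mapping to a $1$-cell of $\Gamma$ (equivalently, after the equivalence relation, to a $1$-cell of $\Ess(\Gamma)$). Since $\partial E$ is a closed loop carried by $\Gamma'$, its image in $X$ is a closed loop carried by $\Ess(\Gamma)$, and by $\pi_1$-injectivity of each component of $\Ess(\Gamma)\hookrightarrow X$ this loop is null-homotopic in $\Ess(\Gamma)$ itself, hence reduces as an edge-path in the graph $\Ess(\Gamma)$. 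A reduction means two consecutive arcs of $\partial E$ that map to the same $1$-cell of $\Gamma$ with opposite orientations; the two $2$-cells of $D$ meeting along the vertex between these arcs then have boundary cycles that are lifts of the same loop of $X$ (being windmill-shaped over the same $2$-cell of $X$, with the same arc of $\Gamma$ appearing) and can be read in opposite rotational senses from the shared vertex. That is exactly a cancellable pair in the sense of Definition~\ref{def:cancellable}, so $D$ is not minimal — contradiction. (If the two arcs belong to the \emph{same} $2$-cell $R'$, one instead gets that $R'$ alone already fails the windmill connectivity condition or that $\partial E$ bounds no area, handled as a degenerate case.)

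Once $\Gamma'$ is known to be a forest, the interior-leaf claim is similar but easier. A leaf $v$ of $\Gamma'$ in the interior of $D$ is the endpoint of a single arc of $\Gamma'$, and $v$ lies in the interior of some $2$-cell edge-configuration; locally around $v$ the windmill structure of the incident $2$-cells forces at least two arcs of $\Gamma'$ to emanate from any interior vertex that lies on $\Gamma'$ — indeed each windmill arc has \emph{both} endpoints on the cell boundary, so an interior vertex of $D$ on $\Gamma'$ is either a crossing point of arcs from two cells or an endpoint on a shared $1$-cell of $D$, and in the latter case the two cells sharing that $1$-cell form a cancellable pair by the same reasoning as above. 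Thus no interior leaf survives in a minimal-area diagram, and every leaf of $\Gamma'$ must lie on $\partial D$.

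The main obstacle I anticipate is making the ``development along $\Ess(\Gamma)$'' step fully rigorous in the presence of redundant $2$-cells and $2$-cells attached by proper powers: the bookkeeping that identifies when two boundary arcs of the innermost subdiagram genuinely yield a cancellable \emph{pair} (as opposed to an artifact of the $n$-fold symmetry or of a redundancy, which the Remark following Definition~\ref{def:cancellable} says can be absorbed by re-choosing the map $D\to X$) is exactly what the passage from $\Gamma$ to $\Ess(\Gamma)$ is designed to handle, and getting that correspondence airtight — together with the innermost-subdiagram degenerate cases where $\partial E$ has length zero or one in $\Ess(\Gamma)$ — is where the real care is needed.
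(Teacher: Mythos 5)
Your argument for the forest statement is essentially the paper's proof run in the contrapositive direction: choose an innermost cycle, push its image into $\Ess(\Gamma)$, use $\pi_1$-injectivity together with null-homotopy in $D$ to conclude the edge-path in $\Ess(\Gamma)$ has a backtrack, and convert the backtrack into a cancellable pair contradicting Lemma~\ref{lem:minimal} (the paper instead argues: minimality forces $\psi(Q)$ to be immersed in $\Gamma$ and then in $\Ess(\Gamma)$, hence essential in $X$, contradicting null-homotopy; the content is the same). The one step you assert rather than prove is precisely the delicate one: that the two $2$-cells at the backtrack ``can be read in opposite rotational senses.'' This is not automatic from the backtrack. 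It is exactly where innermostness must be used: because $E$ is innermost, the side of the two arcs facing the interior of $E$ lies entirely in $\psi^{-1}(Z)$ (or entirely in $\psi^{-1}(Y)$), and since each windmill arc in a $2$-cell of $X$ has the $Z$-side on one side and the $Y$-side on the other, both $2$-cells present the same side of the image arc to the interior of $E$; only this forces the two maps to be mirror images across the shared $1$-cell, i.e.\ to have the opposite orientations required by condition~(3) of Definition~\ref{def:cancellable}. You set up the innermost choice but never invoke it here; without it the two cells could be coherently oriented and would not cancel.

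The interior-leaf half, however, contains a genuine flaw. You claim that if an arc of $\Gamma'$ ends at an interior point of a $1$-cell of $D$ and no arc continues into the adjacent $2$-cell, then the two $2$-cells sharing that $1$-cell ``form a cancellable pair by the same reasoning as above.'' Nothing forces those two $2$-cells to map to the same $2$-cell of $X$, or to redundant or proper-power-related ones: they are adjacent in $D$ for reasons having nothing to do with $\Gamma$, so neither condition~(1) nor condition~(3) of Definition~\ref{def:cancellable} is available and this step fails as written (it also silently assumes the putative leaf lies on a $1$-cell rather than a $0$-cell of $D$). The correct argument is much simpler and uses neither minimality nor cancellable pairs: the two sides of the unique edge at a leaf lie in $\psi^{-1}(Y\setminus\Gamma)$ and $\psi^{-1}(Z\setminus\Gamma)$ respectively, yet if the leaf vertex were interior to $D$ these two sides could be joined by a small path around the vertex missing $\Gamma'$, so they would lie in the same preimage --- a contradiction. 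This is the paper's one-sentence proof of the second assertion, and you should replace your cancellable-pair reasoning with it.
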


\begin{proof}
  Let $Y$ and $Z$ denote complementary subcomplexes in $X$, one of
  which is a splitting windmill.  Which letter represents the windmill
  will be irrelevant since the proof is symmetric with respect to $Y$
  and $Z$.  The second assertion is immediate since every $1$-cell in
  $\Gamma'$ traverses an open $2$-cell of $D$ in which one side
  belongs to $\phi^{-1}(Y)$ and the other to $\phi^{-1}(Z)$, whereas
  if $D$ contained a leaf in its interior, both sides of its unique
  $1$-cell would necessarily belong to the same preimage.

  Suppose that $\Gamma'$ contains a cycle.  By choosing an innermost
  cycle we can find a cycle $Q$ in $\Gamma'\subset D$ so that the
  portion of $D$ to the left of $Q$ belongs entirely to $\psi^{-1}(Y)$
  or entirely to $\psi^{-1}(Z)$ as $Q$ is traversed counterclockwise.
  Without loss of generality assume it belongs to $\psi^{-1}(Z)$.  If
  $\psi(Q)$ is not an immersed loop in $\Gamma$, then the $2$-cells
  containing the portion of $Q$ immediately before and after a point
  which fails to be an immersion will form a cancellable pair in $D$.
  Note that we need the fact that the portion of $D$ to the left of
  $Q$ lies in $\psi^{-1}(Z)$ to conclude that these $2$-cells have
  opposite orientations.  Since by Lemma~\ref{lem:minimal} this
  contradicts our assumption that $D$ has minimal area, $\psi(Q)$ must
  be immersed.  Moreover, since $\Gamma$ is a graph, $\psi(Q)$ is an
  essential in $\Gamma$.

  Next, let $\phi:\Gamma\to \Ess(\Gamma)$ be the natural projection
  which sends each $1$-cell in $\Gamma$ to the $1$-cell in
  $\Ess(\Gamma)$ which represents its equivalence class.  We claim
  that $\phi(\psi(Q))$ is immersed -- hence essential -- in
  $\Ess(\Gamma)$.  If not, then as above, the $2$-cells containing the
  portion of $Q$ immediately before and after the point which fails to
  be an immersion will form a cancellable pair in $D$.  The difference
  is that this time the two $2$-cells are not sent to $X$ in identical
  ways; they might be sent to redundant $2$-cells or in different ways
  to a single $2$-cell attached by a proper power.  Finally,
  $\phi(\psi(Q))$ is essential in $X$ since the inclusion
  $\Ess(\Gamma)\hookrightarrow X$ is $\pi_1$-injective by assumption.
  On the other hand, $D$ is simply-connected, so $Q$ is null-homotopic
  in $D$ and its image should be null-homotopic in $X$.  This
  contradiction shows that $\Gamma'$ is a forest.
\end{proof}

\begin{rem}[Structure of $\Gamma'$]
  The conclusion of Theorem~\ref{thm:forest} does not preclude the
  existence of trivial components in the interior of $D$ since the
  arguments given need an edge to get started.  Such isolated interior
  points can arise if $\Gamma$ passes through a $0$-cell of $X$.
  Another complication is that the components of $\Gamma'$ can be
  quite complicated trees.  Figure~\ref{fig:branching} illustrates how
  such branching can occur.  Although we will not need this
  simplification, we note that neither complication will occur when
  $\Gamma\cap X^0$ is empty.
\end{rem}

\begin{figure}[ht]
  \includegraphics[scale=1.3]{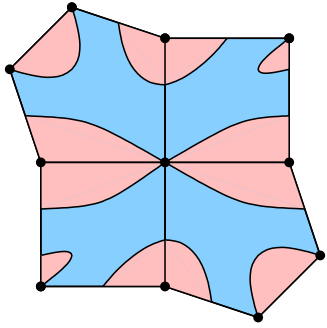}
  \caption{An example of branching in $\Gamma'$}
  \label{fig:branching}
\end{figure}

Despite the fact that $\Gamma'$ might branch in $D$, there is enough
structure to ensure that $D$ is constructed by gluing together
components in a tree-like fashion.  To make this precise we introduce
the idea of a connection graph.

\begin{defn}[Connection graph]\label{def:connection}
  If $D$ is a disc diagram and $\Gamma\hookrightarrow D$ is a graph in
  $D$, then we define its \emph{connection graph} $\Conn(\Gamma,D)$ as
  follows.  The vertices of $\Conn(\Gamma,D)$ are the path components
  of $\Gamma$ and the path components of $D\setminus \Gamma$, and we
  have an edge from $u$ to $v$ when $u$ represents a component
  $\Gamma_0$ of $\Gamma$, $v$ represents a component $D_0$ of
  $D\setminus \Gamma$, and $\Gamma_0 \cap \partial D_0 \neq
  \emptyset$.
\end{defn}

\begin{rem}[Paths]\label{rem:paths}
  Since the components involved are path connected and the edges
  represent adjacency in $D$, for any combinatorial path $P\to
  \Conn(\Gamma,D)$, we can create a path $Q\to D$ which traces through
  the corresponding components in the exact same order.  Moreover, if
  $P$ is simple, we can choose $Q$ to be simple.  Conversely, generic
  paths $Q\to D$ determine combinatorial paths $P\to\Conn(\Gamma,D)$
  which simply trace the components traversed. Our standing assumption
  that maps can be suitably subdivided to be combinatorial, rules out
  pathological paths which wiggle across a single edge in $\Gamma$
  infinitely often in a decaying manner.
\end{rem}

\begin{lem}[Tree-like]\label{lem:tree-like}
  Let $D$ be a disc diagram and let $\Gamma \hookrightarrow D$ be a
  forest in $D$.  If the leaves of $\Gamma$ lie in $\partial D$, then
  its connection graph, $T=\Conn(\Gamma,D)$, is a tree.  If in
  addition no isolated vertices of $\Gamma$ are contained in the
  interior of $D$, then the components of $D \setminus \Gamma$ are
  simply connnected.
\end{lem}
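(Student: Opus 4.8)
The plan is to establish the two conclusions separately. For the first conclusion---that $T = \Conn(\Gamma,D)$ is a tree---I would argue by ruling out cycles and establishing connectedness. Connectedness is easy: $D$ is connected, so any two components of $\Gamma$ or of $D\setminus\Gamma$ can be joined by a path in $D$, and by Remark~\ref{rem:paths} such a path gives a combinatorial path in $\Conn(\Gamma,D)$ joining the corresponding vertices. The real content is showing $T$ has no cycles. Suppose it did; by Remark~\ref{rem:paths} a simple cycle in $T$ lifts to a simple closed curve $Q$ in $D$ which alternately passes through components of $\Gamma$ and components of $D\setminus\Gamma$. Since $D$ is a planar disc diagram, $Q$ is a Jordan curve bounding a subdisc $D_Q$ of $D$. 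I would then derive a contradiction from the fact that $\Gamma$ is a forest whose leaves all lie in $\partial D$: the part of $\Gamma$ meeting $Q$ cannot ``escape'' $D_Q$ without either hitting $\partial D$ (forcing $\partial D$ inside $D_Q$, impossible since $D_Q$ is a disc in the interior except along $Q$ itself) or creating a cycle inside $\Gamma$. More carefully, each component $\Gamma_0$ of $\Gamma$ visited by $Q$ is a tree; the curve $Q$ enters and leaves $\Gamma_0$, and since $\Gamma_0$ is simply connected, the subarc of $Q$ running through $\Gamma_0$ together with an arc inside $\Gamma_0$ bounds a bigon. Chaining these observations around the cycle, one finds that the cycle in $T$ is forced to have length zero or that some $\Gamma_0$ must reach $\partial D$ through the interior of $D_Q$---contradicting that $D_Q$'s boundary $Q$ is the only part of $D_Q$ that can touch $\partial D$. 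I expect this planarity-plus-forest bookkeeping to be the main obstacle, since one must carefully track which side of $Q$ each component lies on and use that leaves of $\Gamma$ are confined to $\partial D$ to prevent a component of $\Gamma$ from wandering into the enclosed disc and back out.

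For the second conclusion, assume additionally that $\Gamma$ has no isolated vertices in the interior of $D$, and let $D_0$ be a component of $D\setminus\Gamma$; I want to show $D_0$ is simply connected. The closure $\overline{D_0}$ is a planar subcomplex of $D$, so it suffices to show it has no ``holes,'' i.e.\ that its complement in $S^2$ (or its relation to the rest of $D$) is connected. I would argue: if $\overline{D_0}$ were not simply connected, there would be an essential loop in $D_0$, which in the planar disc $D$ must enclose something not in $D_0$---necessarily a nonempty union of components of $\Gamma$ together with other components of $D\setminus\Gamma$, all trapped in the interior of $D$. But a component of $\Gamma$ trapped strictly in the interior of $D$ is a finite tree all of whose leaves would have to lie in $\partial D$ by hypothesis, forcing it to be a single vertex; and that vertex would then be an isolated vertex of $\Gamma$ in the interior of $D$, contradicting the extra hypothesis. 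One also has to handle the possibility that what is enclosed is another component $D_1$ of $D\setminus\Gamma$ with no $\Gamma$-component between it and $D_0$, but then $D_0$ and $D_1$ would be adjacent across nothing, i.e.\ actually the same component---so this case does not arise. Thus nothing can be enclosed and $D_0$ is simply connected.

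I would organize the write-up so that the cycle-freeness argument for $T$ comes first and is stated in terms of lifting simple cycles via Remark~\ref{rem:paths}, then deduce the simple-connectivity of the $D\setminus\Gamma$ components as a corollary of the same enclosure analysis. The delicate points to get right are: (i) that a simple combinatorial cycle in $T$ genuinely lifts to an embedded Jordan curve in $D$ (this uses the path-component structure and the planarity of $D$), and (ii) the precise statement that a subtree of $\Gamma$ with no leaves in the interior of $D$ and not meeting $\partial D$ must be empty or a single point---which is exactly where both the ``forest'' hypothesis and the two boundary/isolated-vertex hypotheses are consumed.
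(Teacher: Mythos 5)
Your second claim (simple connectivity of the components of $D\setminus\Gamma$) is argued essentially as in the paper: an essential simple closed curve in a component $D_0$ bounds a region of $D$ that must contain a whole component of $\Gamma$; a nontrivial component is a finite tree with at least two leaves, which by hypothesis lie on $\partial D$ and hence cannot be trapped in the enclosed region, so the trapped component is an isolated interior vertex, contradicting the extra hypothesis. That part is fine (your handling of the ``enclosed component of $D\setminus\Gamma$ with no $\Gamma$ in between'' case is loose, but it is easily repaired by noting the frontier of any enclosed component of $D\setminus\Gamma$ lies in $\Gamma$).

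The first claim is where there is a genuine gap. Your setup coincides with the paper's (lift a simple cycle $P$ in $T$ to a simple closed curve $Q$ in $D$ via Remark~\ref{rem:paths}, and let $Q$ bound a subdisc), but the contradiction you propose is not an argument. First, the ``bigon'' step is vacuous as stated: by Remark~\ref{rem:paths} the subarc of $Q$ assigned to a component $\Gamma_0$ already lies \emph{inside} $\Gamma_0$, and in a tree two embedded arcs with the same endpoints coincide, so there is no second arc to bound a bigon with. Second, the asserted dichotomy (``the cycle has length zero or some $\Gamma_0$ must reach $\partial D$ through the interior of $D_Q$'') is exactly what would need proof, and it is not the actual obstruction: each $\Gamma_0$ can lie along $Q$ with all of its leaves reaching $\partial D$ outside $D_Q$, so nothing in your sketch is contradicted. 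What your sketch never uses is the crucial input that the complementary components visited consecutively by $Q$ are \emph{distinct} components of $D\setminus\Gamma$, so that $\Gamma\cap D_Q$ must separate the corresponding subarcs of $Q$ inside $D_Q$; analyzing a separating component of this forest (it must meet $Q$ in at least two points, and it can only meet $Q$ inside the $\Gamma$-portions of $Q$) is what produces the contradiction with simplicity of $P$. This is the content of the paper's one-line mechanism: an innermost component of $D_Q\setminus\Gamma$ forces the simple cycle $P$ to traverse an edge and then its inverse, i.e.\ to backtrack, which is impossible. You need to supply an argument of this kind; as written, the heart of the tree claim is missing.
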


\begin{proof}
  Suppose $P\rightarrow T$ is a nontrivial closed simple cycle.  We
  will reach a contradiction by showing that $P$ has a {\em backtrack}
  meaning that it traverses an edge followed by its inverse.  Let
  $Q\rightarrow D$ be the closed simple cycle from
  Remark~\ref{rem:paths}.  Then $Q$ bounds a disc diagram $D'\subset
  D$. Since $P$ is nontrivial, $Q$ intersects $\Gamma$. An innermost
  component of $D'-\Gamma$ determines a backtrack of $P$.

  Suppose some component $D_0$ of $D\setminus \Gamma$ is not
  simply-connected.  Let $Q\subset D_0$ be an essential simple closed
  curve.  Let $D'$ be the region bounded by $Q$. Then $D'$ cannot be a
  disc since $Q$ is essential. Thus $D'$ contains some component of
  $\Gamma$, which is necessarily a trivial component since any
  nontrivial component intersects $\partial D$ by
  Theorem~\ref{thm:forest}.
\end{proof}

In order to take full advantage of Lemma~\ref{lem:tree-like} we
introduce the notion of a modified preimage.

\begin{defn}[Modified preimages]\label{def:preimages}
  Let $X$ be $2$-complex and let $Y$ and $Z$ be complementary
  subcomplexes separated by $\Gamma =Y\cap Z$.  If $\psi:D\to X$ is a
  disc diagram over $X$, then we partition $D$ into sets $Y'$, $Z'$
  and $\Gamma'$ as follows.  Let $\Gamma'$ be $\psi^{-1}(\Gamma)$ with
  any isolated points in the interior of $D$ removed and let $Z' =
  \psi^{-1}(Z) \setminus \Gamma'$ and $Y'=\psi^{-1}(Y) \setminus
  \Gamma'$. We will call $\Gamma'$, $Y'$ and $Z'$ the \emph{modified
    preimages} of $\Gamma$, $Y$ and $Z$, respectively.  Notice that
  $Y'$ and $Z'$ are open in $D$ and $\Gamma'$ is closed.
\end{defn}

The sets $Z'$ and $Y'$ are almost the same as $\psi^{-1}(Z \setminus
\Gamma)$ and $\psi^{-1}(Y \setminus \Gamma)$ except that the isolated
points of $\psi^{-1}(\Gamma)$ in the interior of $D$ have been added
to the regions which contain them.  Adding these points will ensure
that the components of $Z'$ and $Y'$ will be simply-connected whenever
$\Gamma'$ is a forest with all its leaves in $\partial D$.
In particular, the following corollary is an immediate consequence of
Theorem~\ref{thm:forest}, Lemma~\ref{lem:tree-like}, and
Definition~\ref{def:preimages}.

\begin{cor}[Simply-connected]\label{cor:connected}
  Let $X$ be a $2$-complex and let $\Gamma\hookrightarrow X$ be the
  subgraph which separates a splitting windmill $Z$ from its
  complement $Y$.  If $P\to X$ is a non-trivial null-homotopic
  immersed combinatorial path and $\psi:D\to X$ is a minimal area disc
  diagram having $P$ as its boundary cycle, then each component of
  each modified preimage, $\Gamma'$, $Y'$ and $Z'$ is
  simply-connected.
\end{cor}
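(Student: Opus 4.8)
The plan is to assemble Corollary~\ref{cor:connected} directly from the three ingredients the excerpt has set up, so the ``proof'' is mostly bookkeeping. First I would apply Theorem~\ref{thm:forest} to the minimal area disc diagram $\psi:D\to X$: since $\Gamma$ separates a splitting windmill from its complement and $P$ is a non-trivial null-homotopic immersed combinatorial path, the theorem gives that $\psi^{-1}(\Gamma)$ is a forest with every leaf in $\partial D$. Then I would invoke Definition~\ref{def:preimages} to pass from $\psi^{-1}(\Gamma)$ to the modified preimage $\Gamma'$, which is obtained by deleting the isolated interior points; deleting isolated vertices from a forest still leaves a forest, and it does not create any new leaves, so $\Gamma'$ is a forest all of whose leaves lie in $\partial D$ and, by construction, with no isolated vertices in the interior of $D$.

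Next I would feed $\Gamma'$ into Lemma~\ref{lem:tree-like}, viewing $\Gamma'\hookrightarrow D$ as a forest in the disc diagram $D$. The hypotheses of that lemma are exactly what the previous paragraph established: the leaves of $\Gamma'$ lie in $\partial D$, and no isolated vertices of $\Gamma'$ lie in the interior of $D$. The lemma therefore yields that the connection graph $\Conn(\Gamma',D)$ is a tree and that the components of $D\setminus\Gamma'$ are simply connected. Since the vertices of a tree are in particular simply connected (being points, up to homotopy of the relevant pieces), and more to the point the components of $\Gamma'$ are themselves trees hence simply connected, we already get simple connectivity of the components of $\Gamma'$ from the forest property alone.

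It remains to identify the components of $D\setminus\Gamma'$ with the components of $Y'$ and $Z'$. By Definition~\ref{def:preimages}, $Y'$ and $Z'$ partition $D\setminus\Gamma'$, and each is open in $D$; moreover each component of $D\setminus\Gamma'$ lies entirely inside $\psi^{-1}(Y)$ or entirely inside $\psi^{-1}(Z)$, because a path inside a component of $D\setminus\Gamma'$ never crosses $\Gamma'$ and hence its image never crosses $\Gamma=Y\cap Z$. Consequently the components of $Y'$ and the components of $Z'$ are precisely the components of $D\setminus\Gamma'$, and these are simply connected by Lemma~\ref{lem:tree-like}. Combining this with the simple connectivity of the components of $\Gamma'$ gives the conclusion. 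The only point that needs the mildest care — and it is the step I would watch most closely — is confirming that removing the isolated interior points really does preserve ``no new leaves and still a forest,'' and that it genuinely converts the components of $Y'$ and $Z'$ into honest components of $D\setminus\Gamma'$ rather than leaving some stray puncture; once Definition~\ref{def:preimages} is read carefully this is immediate, which is why the excerpt can legitimately call the corollary ``an immediate consequence.''

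\begin{proof}
  By Theorem~\ref{thm:forest}, $\psi^{-1}(\Gamma)$ is a forest in $D$
  all of whose leaves lie in $\partial D$.  Passing to the modified
  preimage $\Gamma'$ of Definition~\ref{def:preimages} only deletes
  the isolated points of $\psi^{-1}(\Gamma)$ lying in the interior of
  $D$; this deletion leaves a forest, creates no new leaves, and
  removes all interior isolated vertices.  Hence $\Gamma'$ is a forest
  with every leaf in $\partial D$ and no isolated vertex in the
  interior of $D$, so each component of $\Gamma'$ is a tree and in
  particular simply-connected.

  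Applying Lemma~\ref{lem:tree-like} to $\Gamma'\hookrightarrow D$, we
  conclude that $\Conn(\Gamma',D)$ is a tree and that every component
  of $D\setminus\Gamma'$ is simply-connected.  By
  Definition~\ref{def:preimages} the open sets $Y'$ and $Z'$ partition
  $D\setminus\Gamma'$, and any path contained in a single component of
  $D\setminus\Gamma'$ maps to a path in $X$ disjoint from
  $\Gamma=Y\cap Z$, hence contained entirely in $Y\setminus\Gamma$ or
  entirely in $Z\setminus\Gamma$.  Therefore each component of
  $D\setminus\Gamma'$ is a component of $Y'$ or of $Z'$, and
  conversely.  Thus the components of $Y'$ and $Z'$ are exactly the
  (simply-connected) components of $D\setminus\Gamma'$, and together
  with the simple connectivity of the components of $\Gamma'$ this
  proves the corollary.
\end{proof}
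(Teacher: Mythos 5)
Your proof is correct and takes essentially the same route as the paper, which offers no separate argument at all but simply declares the corollary an immediate consequence of Theorem~\ref{thm:forest}, Lemma~\ref{lem:tree-like} and Definition~\ref{def:preimages} — exactly the assembly you carry out (forest with leaves in $\partial D$, no interior isolated vertices after modification, components of $\Gamma'$ are trees, components of $D\setminus\Gamma'$ simply-connected, and these are the components of $Y'$ and $Z'$). One cosmetic caveat: a path inside a component of $D\setminus\Gamma'$ may pass through one of the deleted isolated points of $\psi^{-1}(\Gamma)$, so its image can meet $\Gamma$; the accurate statement is that a small punctured neighborhood of such a point is connected and maps into exactly one of the disjoint open sets $Y\setminus\Gamma$ or $Z\setminus\Gamma$ (which is precisely why Definition~\ref{def:preimages} assigns these points to the surrounding region), and with that adjustment your identification of the components of $Y'$ and $Z'$ with those of $D\setminus\Gamma'$ is exactly the paper's intent.
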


A non-singular subdiagram of a disc diagram $D$ which is attached to
the rest of $D$ at a single point is a \emph{dangling subdiagram}.  As
a quick illustration of Lemma~\ref{lem:tree-like} we give a short
proof of the well-known result that certain disc diagrams must contain
dangling subdiagrams.

\begin{lem}[Dangling subdiagrams exist]\label{lem:dangling}
  If $X$ is a $2$-complex, $P\to X$ is a non-trivial null-homotopic
  immersed combinatorial loop, and $D\to X$ is a disc diagram having
  $P$ as its boundary cycle, then either $D$ itself is non-singular or
  $D$ contains at least two dangling subdiagrams.
\end{lem}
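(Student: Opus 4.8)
The plan is to deduce everything from Lemma~\ref{lem:tree-like} applied to the natural decomposition of $D$ along its cut points. First I would dispose of the case that $D$ is non-singular, so assume $D$ is singular. Since $P\to X$ is immersed, $D$ has no \emph{spurs}: there is no $0$-cell of valence $1$, since the single edge at such a vertex would be traversed twice in succession by $\partial D$, contradicting that $P$ is immersed. Now let $\Gamma\hookrightarrow D$ be the set of \emph{cut points} of $D$, i.e.\ the $0$-cells $v$ for which $D\setminus\{v\}$ is disconnected; equivalently, the $0$-cells whose link in $D$ is disconnected. Each cut point lies on $\partial D$, and $\Gamma$, being $0$-dimensional, is a forest with no leaves at all and with no isolated vertex in the interior of $D$. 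Thus Lemma~\ref{lem:tree-like} applies: $T=\Conn(\Gamma,D)$ is a tree and every component of $D\setminus\Gamma$ is simply connected.

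Next I would observe that $\Gamma\neq\emptyset$. Indeed, if $D$ had no cut point then it would also have no bridge, since a bridge edge has, at each endpoint, a gap on both angular sides, forcing that endpoint to be a spur or a cut point; hence every $1$-cell of $D$ would border a $2$-cell, and together with the absence of cut points (so every $0$-cell has connected link) this makes $D$ a compact surface with boundary, and a finite contractible one is a disc, contradicting that $D$ is singular. So $T$ has at least two vertices and therefore at least two leaves. A cut point $v$ has valence $\ge 2$ in $T$, since it lies in the closure of each of the $\ge 2$ components into which it separates $D$; so every leaf of $T$ is (the vertex for) a component $D_0$ of $D\setminus\Gamma$, adjacent in $T$ to a single cut point $v$, and $\overline{D_0}=D_0\cup\{v\}$ meets the closure of $D\setminus\overline{D_0}$ only in $v$.

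It then remains to check that each such $\overline{D_0}$ is non-singular, so that it is a dangling subdiagram of $D$; the two leaves then furnish the two required dangling subdiagrams, and they are distinct since the corresponding components of $D\setminus\Gamma$ are disjoint. By Lemma~\ref{lem:tree-like} the set $D_0$ is simply connected, so $\overline{D_0}$ is simply connected, hence (being finite and planar) contractible, so it is a disc diagram. It has no cut point: a cut point in $D_0$ would disconnect $D$, and $v$ does not disconnect $\overline{D_0}$ because $\overline{D_0}\setminus\{v\}=D_0$ is connected. It has no spur either: at a $0$-cell of $D_0$ the local structure of $\overline{D_0}$ agrees with that of $D$, and if $\overline{D_0}$ had valence $1$ at $v$ its unique edge $e$ at $v$ would be a bridge, whose far endpoint would then (as above, using that $D$ has no spur) also be a cut point, so that $\overline{D_0}=e$ joins two cut points --- contradicting that $D_0$ is a leaf of $T$. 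Finally, a finite planar disc diagram with no cut point and no spur is a $2$-manifold with boundary, hence non-singular.

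I expect the main obstacle to be the collection of standard-but-fiddly planar facts used above: that cut points coincide with the vertices of disconnected link and lie on $\partial D$, that a cut point separates $D$ exactly into the components it is adjacent to in $T$, that a spurless cut-point-free diagram is a disc, and the bridge dichotomy. These are where planarity and simple connectivity of $D$ are really used, and where the immersed hypothesis on $P$ does its work, namely by forbidding spurs --- which is precisely what prevents a leaf piece $\overline{D_0}$ from degenerating to a stray edge or vertex.
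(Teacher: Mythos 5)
Your proof is correct and follows essentially the same route as the paper: take $\Gamma$ to be the cut vertices of $D$, apply Lemma~\ref{lem:tree-like} to get the tree $T=\Conn(\Gamma,D)$, observe that cut vertices have valence at least $2$ in $T$ so leaves correspond to components of $D\setminus\Gamma$ attached at a single point, and use the immersion of $P$ (no spurs) to rule out degenerate leaves before invoking the fact that a non-trivial finite tree has at least two leaves. The only differences are organizational (you dispose of the non-singular case up front rather than via a trivial $T$) and that you spell out the standard planar facts the paper leaves implicit.
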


\begin{proof}
  Let $\Gamma$ be the collection of $0$-cells of $D$ whose removal
  disconnects $D$ (i.e. cut vertices) and note that a disc diagram
  without cut vertices is either trivial, a single $1$-cell, or
  non-singular.  By Lemma~\ref{lem:tree-like}, $T=\Conn(\Gamma,D)$ is
  a tree, and by construction each vertex of $\Gamma$ corresponds to a
  vertex of $T$ with valence at least $2$.  Thus the leaves of $T$
  correspond to components of $D\setminus \Gamma$ attached to the rest
  of $D$ at a single point.  Since trivial subdiagrams cannot be
  separated off by cut vertices and $1$-cells attached at a single
  point are prohibited since $P$ is immersed, the leaves of $T$
  correspond to dangling subdiagrams.  Similarly, if $T$ is trivial,
  then $D$ is non-singular since the restrictions on $P$ ensure that
  $D$ is not a single $0$-cell or single $1$-cell.  The result now
  follows from the observation that finite trees are either trivial or
  have at least two leaves.
\end{proof}

Our second application is only slightly more complicated.  In order to
state the result we will need the notion of an outermost component.

\begin{defn}[Outermost components]\label{def:outer}
  Let $Z$ be a subcomplex of a $2$-complex $X$, let $\psi:D\to X$ be a
  disc diagram, and let $Z'$ be the modified preimage of $Z$. A
  component $Z_0$ of $Z'$ is \emph{outermost} if $Z' \setminus Z_0$ is
  contained in a single connected component of $D \setminus Z_0$.
\end{defn}

\begin{lem}[Outermost components exist]\label{lem:outer}
  Let $Z$ be a splitting windmill in a $2$-complex $X$, let $P\to X$
  be a non-trivial null-homotopic immersed combinatorial path and let
  $\psi:D\to X$ be a minimal area disc diagram having $P$ as its
  boundary cycle.  If $D$ is non-singular and $Z'$ is the modified
  preimage of $Z$ in $D$, then either $Z'$ is connected or $Z'$ has at
  least two outermost components.  Moreover, for each outermost
  component $Z_0$ of $D$ there exists a simple path $Q\to D$ in
  $\partial Z_0$ so that $D\setminus Q$ is disconnected and $Z_0$ lies
  in a different connected component of $D\setminus Q$ from the rest
  of $Z'$.
\end{lem}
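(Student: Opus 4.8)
The plan is to apply the tree-like structure of $\Conn(\Gamma', D)$ established in Lemma~\ref{lem:tree-like}, but with a slight twist: rather than working directly with the connection graph on all components, I would first ``collapse'' the $Y'$-components so as to see only how the $Z'$-components sit relative to one another and to $\partial D$. Concretely, let $T = \Conn(\Gamma',D)$ where $\Gamma' = \psi^{-1}(\Gamma)$ with isolated interior points removed; by Theorem~\ref{thm:forest} and Lemma~\ref{lem:tree-like}, $T$ is a tree and (since $D$ is non-singular, hence has no isolated interior vertices of $\Gamma'$) every component of $D\setminus\Gamma'$, in particular every component of $Z'$ and of $Y'$, is simply connected. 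A component $Z_0$ of $Z'$ corresponds to a vertex $z_0$ of $T$, and I claim $Z_0$ is outermost in the sense of Definition~\ref{def:outer} precisely when removing $z_0$ from $T$ leaves at most one subtree containing another $Z'$-vertex.

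First I would reduce to the combinatorial statement. Deleting the vertex $z_0$ from the tree $T$ splits $T$ into several subtrees, one for each edge at $z_0$, i.e. one for each component $\Gamma_0$ of $\Gamma'$ meeting $\partial Z_0$. By the path-tracing correspondence of Remark~\ref{rem:paths} and the fact that $D$ is connected, each such subtree $T_j$ corresponds exactly to one connected component $D_j$ of $D\setminus Z_0$: a point of $D\setminus Z_0$ lies in $D_j$ iff the component of $D\setminus\Gamma'$ containing it is a vertex of $T_j$ (using that any path in $D$ from such a point that avoids $Z_0$ traces a path in $T$ avoiding $z_0$, and conversely). Hence $Z'\setminus Z_0$ lies in a single component of $D\setminus Z_0$ iff all $Z'$-vertices of $T$ other than $z_0$ lie in a single one of the subtrees $T_j$. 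So ``outermost'' is equivalent to: in the tree $T$, all $Z'$-vertices other than $z_0$ lie on one side of $z_0$.

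Next, the existence of two such vertices when $Z'$ is disconnected is a purely tree-theoretic fact. If $Z'$ has at least two components, pick any two $Z'$-vertices of $T$ and consider the (unique, finite) subtree $T'$ of $T$ spanned by all $Z'$-vertices together with the arcs between them; $T'$ is a finite tree with at least two vertices, hence has at least two leaves, and each leaf of $T'$ is a $Z'$-vertex (since the endpoints of $T'$ were chosen among $Z'$-vertices, and a leaf of a spanning subtree of a prescribed vertex set must be one of those vertices). A leaf $z_0$ of $T'$ has all other $Z'$-vertices on one side in $T'$, hence on one side in $T$ as well — this is the outermost condition. The main obstacle in this part is making precise that leaves of the spanned subtree are genuinely among the $Z'$-vertices; this is handled by noting that $T'$ is the union of the finitely many embedded arcs between chosen $Z'$-vertices, so any degree-one vertex of $T'$ must be an endpoint of one of those arcs.

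Finally, for the ``moreover'' clause I would produce the separating path $Q$. Fix an outermost $Z_0$ and let $T_1$ be the unique subtree of $T\setminus z_0$ containing all other $Z'$-vertices; it is attached to $z_0$ along a single edge, corresponding to a single component $\Gamma_0$ of $\Gamma'$ with $\Gamma_0\cap\partial Z_0\neq\emptyset$. Since $Z_0$ and $\Gamma_0$ are both simply connected (Corollary~\ref{cor:connected}) and $D$ is a disc, the intersection $\Gamma_0 \cap \partial Z_0$, and indeed the relevant piece of $\partial Z_0$, is an arc; I take $Q$ to be a simple path in $\partial Z_0$ running along (a suitable sub-arc of) this interface — more carefully, $Q$ should be a simple arc in $\overline{Z_0}$ joining two points of $\partial D$, staying in $\partial Z_0$, chosen so that on one side of $Q$ in $D$ sits $Z_0$ and on the other side sits everything reached through $\Gamma_0$, namely the rest of $Z'$. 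That such a $Q$ exists and separates $D$ as claimed follows from the disc structure together with the observation that every component of $Z'$ other than $Z_0$, as well as $\partial D$ away from $Z_0$, is reached from $Z_0$ only by passing through $\Gamma_0$; the potential subtlety — whether $Q$ can be taken to hit $\partial D$ in exactly its two endpoints rather than running along $\partial D$ — is resolved by pushing $Q$ slightly into the interior of the $Z_0$-side except at its two endpoints, which is legitimate since $Z_0$ is simply connected and open in $D$.
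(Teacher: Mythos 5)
Your proposal is correct and follows essentially the paper's own argument: the connection graph $T=\Conn(\Gamma',D)$ is a tree, the minimal subtree $T'$ spanned by the $Z'$-vertices has at least two leaves when $Z'$ is disconnected, leaves of $T'$ are exactly the outermost components, and the separating path $Q$ is taken along $\partial Z_0\cap\Gamma_0$ for the unique adjacent $\Gamma'$-component $\Gamma_0$ on the $T'$-side. One small caveat: your fallback of pushing $Q$ slightly into the interior of the $Z_0$-side would move $Q$ off $\partial Z_0$, which the statement requires, but this fix is unnecessary because the interface lies in $\Gamma'$, which meets $\partial D$ only in finitely many points (the arcs of $\Gamma$ cross the $2$-cells of $X$ transversally), so $Q$ never runs along $\partial D$ in the first place.
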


\begin{proof}
  By Theorem~\ref{thm:forest}, $\Gamma'$ is a forest with its leaves
  in $\partial D$, and so by Lemma~\ref{lem:tree-like}, the connection
  graph $T=\Conn(D,\Gamma')$ is a tree.  Consider the smallest subtree
  $T'$ of $T$ which contains all of the vertices corresponding to
  components of $Z'$.  This subtree is either trivial, in which case
  $Z'$ is connected, or it has at least two leaves.  By minimality of
  $T'$ each leaf corresponds to a component of $Z'$, and using
  Remark~\ref{rem:paths}, we see that a component of $Z'$ is an
  outermost component if and only if it corresponds to a leaf in $T'$.

  The final assertion can be shown as follows.  Let $Z_0$ be an
  outermost component which corresponds to a leaf $v$ in $T'$ and let
  $\Gamma_0$ be the component of $\Gamma'$ which corresponds to the
  unique vertex $u$ in $T'$ connected to $v$.  The intersection
  $\partial Z_0 \cap \Gamma_0$ will be a path with the required
  properties.  In particular, the intersection $\partial Z_0 \cap
  \Gamma_0$ is a simple path $Q$ (rather than more complicated
  $1$-complex) since $\Gamma_0$ is a tree with its leaves in $\partial
  D$ and $\partial Z_0$ is a circle, so $\partial Z_0\cap \Gamma_0$
  consists of at most one arc since $T$ is a tree.  The separation
  properties for $Q$ follow immediately from the position of $u$ in
  $T'$ and Remark~\ref{rem:paths}.
\end{proof}

Our third application of Lemma~\ref{lem:tree-like} will show that
certain disc diagrams contain $2$-cells which are extreme in the
following sense.

\begin{defn}[Extreme $2$-cells]\label{def:extreme}
  Let $X$ be a $2$-complex, let $Y$ and $Z$ be complementary
  subcomplexes, and let $\Gamma = Y\cap Z$ be the subgraph which
  separates them.  A $2$-cell $R$ in a disc diagram $\psi:D\to X$ is
  \emph{extreme with respect to $Z$} if $\partial R$ is the
  concatenation of two paths $S$ and $Q$ where $Q$ is a subpath of
  $\partial D$ and $S \cap \psi^{-1}(Z)$ has at most one non-trivial
  component (isolated points in the intersection are
  ignored). Figure~\ref{fig:extreme} contains a sketch of a disc
  diagram which contains four copies of the $2$-cell from
  Figure~\ref{fig:shading}.  The one in the lower lefthand corner is
  not extreme; the other three are extreme.
\end{defn}

\begin{figure}[ht]\centering
  \includegraphics[width=.6\textwidth]{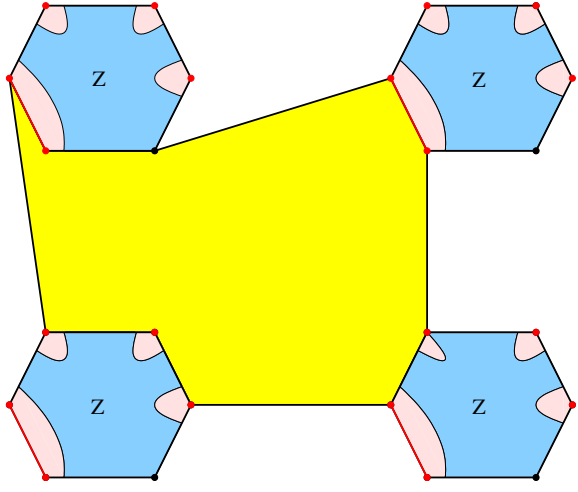}
  \caption{Three extreme $2$-cells in a disc
    diagram.}\label{fig:extreme}
\end{figure}

We will prove three versions of the following result under
successively weaker hypotheses.

\begin{lem}[Extreme $2$-cells exist: first version]\label{lem:extreme1}
  Let $X$ be a $2$-complex, let $Z$ be a splitting windmill in $X$
  with complement $Y$, let $P\to X$ be a non-trivial null-homotopic
  immersed combinatorial path, and let $\psi:D\to X$ be a minimal area
  disc diagram having $P$ as its boundary cycle.  If $D$ is
  non-singular, and the modified preimage of $Z$ in $D$ is connected,
  then either $D$ consists of a single $2$-cell or $D$ contains at
  least two $2$-cells which are extreme with respect to $Z$.
\end{lem}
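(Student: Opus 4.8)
The plan is to reduce immediately to the case in which $D$ has at least two $2$-cells, and then to extract the extreme cells as the leaves of a tree built directly on the $2$-cells of $D$. First I would collect the consequences of the machinery already in place: by Theorem~\ref{thm:forest} the modified preimage $\Gamma'=\psi^{-1}(\Gamma)$ is a forest all of whose leaves lie in $\partial D$; by Lemma~\ref{lem:tree-like} the connection graph $T=\Conn(\Gamma',D)$ is a tree; and by Corollary~\ref{cor:connected} every component of $Y'$, $Z'$ and $\Gamma'$ is simply connected. Since $Z'$ is connected by hypothesis it is a single vertex of $T$, and being a simply connected subset of the disc $D$ it is an open disc. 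The windmill hypothesis gives, inside each $2$-cell $R$, a single connected ``blob'' $B_R$ which is an open disc contained in $Z'$; the chords of $R$ cut $R$ into $B_R$ together with its $Y$-blades, and $\partial R$ reads as an alternating cyclic concatenation of the ``$Z$-arcs'' of $\partial B_R$ that lie along $\partial R$ and the blade arcs. I should note here that Lemma~\ref{lem:outer} gives nothing in this situation, since the windmill preimage is connected, so a genuinely new construction is needed.

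That construction is the \emph{blob graph} $H$: its vertices are the $2$-cells of $D$, and two cells are joined by an edge exactly when they share a $1$-cell mapping into $Z\setminus\Gamma$, equivalently when their blobs are glued along an arc. Because $Z'$ is the union of the blobs together with the interior $Z$-arcs joining them, connectedness of $Z'$ makes $H$ connected; because $Z'$ is a simply connected planar region, $H$ must be a tree -- an innermost cycle in $H$, or an innermost pair of arcs shared by two blobs, would bound a disc in $D$ whose interior meets no blob and hence meets $Y'\cup\Gamma'$, contradicting the simple connectivity of $Z'$. The same argument shows that two cells share at most one $Z$-arc. Thus $H$ is a tree with at least two vertices, so it has at least two leaves.

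The final step is to show that every leaf $R$ of $H$ is extreme with respect to $Z$. If $R$ is a leaf then $B_R$ is glued to exactly one other blob, along a segment contained in a single $Z$-arc $\zeta$ of $R$; the remainder of $\zeta$, and all the other $Z$-arcs of $R$, lie on $\partial D$. Since a $1$-cell of $X$ cannot lie entirely in $\Gamma$ (as $\Gamma\cap X^1$ is a finite set of points), every shared part of $\partial R$ maps into $(Z\setminus\Gamma)\cup(Y\setminus\Gamma)$, so the only shared part of $\partial R$ mapping into $Z$ sits inside $\zeta$. I then want to choose an arc $Q\subseteq\partial D$ of $\partial R$ whose complementary arc $S$ contains $\zeta$ together with every internal portion of $\partial R$ while still meeting $\psi^{-1}(Z)$ in at most the one component $\zeta$; then $\partial R=S\cdot Q$ exhibits $R$ as extreme in the sense of Definition~\ref{def:extreme}, yielding the required two extreme cells.

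The hard part is this last step, because a priori some blade arcs of $R$ could also be internal -- shared with cells whose blades merge with those of $R$ inside a common component of $Y'$ -- and an internal blade arc lying in a blade sector of $R$ not adjacent to $\zeta$ would force $S$ to cross another $Z$-arc of $R$, destroying extremeness. I expect to rule this out using the tree structure of $T$, specifically the fact (a consequence of $T$ being a tree together with the simple connectivity of $Z'$) that each component of $Y'$ borders exactly one component of $\Gamma'$; this should force all chords and blades involved in any merging at $R$ to be arranged so that every internal blade arc of $R$ lies in a single arc of $\partial R$ alongside $\zeta$. When $\Gamma\cap X^0=\emptyset$ there is no branching, each component of $Y'$ is a single blade, no blade arc of $R$ is internal at all, and the conclusion is immediate; the general case is the delicate bookkeeping. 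One should also dispose of the degenerate possibility of a $2$-cell with empty $Z$-region, which does not occur for the windmills $\partial A$ and $\eth A$ of interest.
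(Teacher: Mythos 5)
Your plan founders at its central structural claim: that the ``blob graph'' $H$ (vertices the $2$-cells of $D$, edges given by shared $1$-cells mapping into $Z\setminus\Gamma$) is a tree. This is false in general, and your justification does not rule out the failure. Concretely, let $v$ be an interior vertex of $D$ all of whose incident $1$-cells lie in $\psi^{-1}(Z\setminus\Gamma)$ --- for instance, in the $\eth A$ setting, a vertex of $D$ at which every corner is a $B$--$B$ corner, which occurs whenever some $B_i$ (or $t^{\epsilon_i}$ with $|\epsilon_i|\ge 2$) has length at least two and three or more such corners meet at $v$. The $k\ge 3$ distinct $2$-cells around $v$ then pairwise share $Z$-edges, so $H$ contains a $k$-cycle; but the loop in $D$ realizing this cycle can be taken to be a small circle about $v$, bounding a disc whose interior consists of $v$ together with pieces of the very blobs in the cycle --- all inside $Z'$ --- so there is no conflict with Corollary~\ref{cor:connected} and your ``innermost cycle meets no blob, hence meets $Y'\cup\Gamma'$'' argument simply does not apply. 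Once $H$ can contain cycles, ``connected with at least two vertices'' no longer yields two leaves (a cycle has none), and the production of the two candidate extreme cells collapses. (Connectivity of $H$ also needs an argument: two blobs can be joined inside $Z'$ through a single vertex or along a $Z$-portion of $\partial D$ without the corresponding cells sharing a $1$-cell.) The paper's proof is built precisely to avoid this: it sets $\Delta=D^1\cap Z'$, proves $\Delta$ is a forest with leaves on $\partial Z'$, and works with the bipartite connection graph $\Conn(Z',\Delta)$, in which a branching component of $\Delta$ is a \emph{single} vertex, so the cycles above cannot arise; it then shows the $\ge 2$ distinct $Z$-arcs of each $2$-cell lie in distinct components of $\Delta$ (so blob vertices are never leaves) and takes the minimal subtree $T'$ spanning the blob vertices, whose leaves are the extreme cells.

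The second gap you flag yourself: even granting a leaf, the implication ``leaf $\Rightarrow$ extreme'' is not established. Since $Q$ must be a single subpath of $\partial D$, you must exclude internal $Y$-portions of $\partial R$ lying between two boundary $Z$-arcs, and your sketch (``each component of $Y'$ borders exactly one component of $\Gamma'$ \dots the general case is the delicate bookkeeping'') is not carried out. It can be completed --- if such a $Y$-arc had an interior $1$-cell, the subdiagram it cuts off would contain a $2$-cell, hence points of $Z'$, separated from the blob of $R$ by an arc in $\psi^{-1}(Y)$, contradicting the connectedness of $Z'$ --- but as written this essential step is missing. (Your final worry about a $2$-cell with empty $Z$-region is vacuous: Definition~\ref{def:windmill} forces $n\ge 2$ chords, hence a nonempty $Z$-part, in every $2$-cell.) So the proposal, while close in spirit to the paper's tree argument, has a genuine hole where it shortcuts the connection-graph construction of Lemma~\ref{lem:tree-like}.
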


\begin{proof}
  Let $Y'$ and $Z'$ denote the modified preimages of $Y$ and $Z$, and
  let $\Delta$ be the graph $D^1\cap Z'$.  Observe that $\Delta$ is a
  forest, for otherwise there would be a simple closed curve $Q$ in
  $\Delta$ and, since $D$ is simply-connected, $Q$ would bound a
  non-singular subdiagram of $D$ containing at least one $2$-cell.
  Consequently, $Q$ would lie in $Z'$ but contain points of $Y'$ in
  its interior, contradicting the fact that $Z'$ is simply-connected
  (Corollary~\ref{cor:connected}).  Moreover, the leaves of $\Delta$
  must lie in $\partial Z'$ because the $2$-cells of $X$ are attached
  along immersed paths (Definition~\ref{def:combinatorial}).  Thus, by
  Lemma~\ref{lem:tree-like}, the connection graph $T=\Conn(Z',\Delta)$
  is a tree.

  A similar argument shows that for each $2$-cell $R$ in $D$, the
  distinct portions of $\partial R \cap Z'$ (recall that there are at
  least two by the definition of a windmill) belong to distinct
  components of $\Delta$.  If not, a simple path in $\Delta$
  connecting distinct portions, combined with a simple path connecting
  them through $R\cap Z'$ (which exists because $R\cap Z'$ is
  connected) forms a simple closed path in $Z'$ which surrounds points
  in $Y' \cup \Gamma'$ (in particular there are points of this type in
  $\partial R$ separating the distinct intervals of $\partial R\cap
  Z'$ we have connected).  This contradicts that $Z'$ is
  simply-connected, proving the claim.  Consequently, all leaves of
  $T$ are components of $\Delta$.

  Finally, let $T'$ be the smallest subtree in $T$ which contains all
  of the vertices corresponding to components of $Z'\setminus \Delta$.
  Since the components of $Z'\setminus \Delta$ also correspond to the
  $2$-cells in $D$, $T'$ is a single vertex if and only if $D$
  consists of a single $2$-cell.  Moreover, when $D$ has more than one
  $2$-cell it is easy to see that a $2$-cell of $D$ is extreme with
  respect to $Z$ if and only if it corresponds to a leaf of $T'$.
\end{proof}

Using Lemma~\ref{lem:outer} we can remove the assumption that $Z'$ is
connected.

\begin{lem}[Extreme $2$-cells exist: second version]\label{lem:extreme2}
  Let $X$ be a $2$-complex, let $Z$ be a splitting windmill in $X$
  with complement $Y$, let $P\to X$ be a non-trivial null-homotopic
  immersed combinatorial path, and let $\psi:D\to X$ be a minimal area
  disc diagram having $P$ as its boundary cycle.  If $D$ is
  non-singular, then either $D$ consists of a single $2$-cell or $D$
  contains at least two $2$-cells which are extreme with respect to
  $Z$.
\end{lem}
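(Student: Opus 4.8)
The plan is to reduce Lemma~\ref{lem:extreme2} to Lemma~\ref{lem:extreme1} by decomposing $D$ along the outermost components supplied by Lemma~\ref{lem:outer}. If the modified preimage $Z'$ is already connected, we are done by Lemma~\ref{lem:extreme1}, so assume it is disconnected. By Lemma~\ref{lem:outer}, since $D$ is non-singular, $Z'$ has at least two outermost components, and each outermost component $Z_0$ can be separated off along a simple path $Q\to D$ in $\partial Z_0$: the set $D\setminus Q$ is disconnected, with $Z_0$ on one side and the rest of $Z'$ on the other. The path $Q$ lies in $\Gamma_0$, a single component of $\Gamma' = \psi^{-1}(\Gamma)$, so on the $Z_0$-side of $Q$ the diagram meets $Z$ only through $Z_0$.

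The key step is to show that each outermost component $Z_0$ supplies an extreme $2$-cell. I would cut $D$ along $Q$ to obtain a subdiagram $D_0$ containing $Z_0$ whose intersection with $\psi^{-1}(Z)$ is exactly $Z_0$, hence connected (after discarding any dangling pieces on the wrong side, or more carefully, taking the closure of the component of $D\setminus Q$ containing $Z_0$). This $D_0$ is again a minimal area disc diagram over $X$ — any cancellable pair in $D_0$ would be a cancellable pair in $D$ — and it is non-singular since $D$ is and $Q$ is a simple boundary arc. By construction the modified preimage of $Z$ in $D_0$ is connected, so Lemma~\ref{lem:extreme1} applies: either $D_0$ is a single $2$-cell, or $D_0$ has at least two $2$-cells extreme with respect to $Z$ \emph{in $D_0$}. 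In either case $D_0$ has at least one $2$-cell $R$ whose boundary, after removing the subpath lying along $Q$, meets $\psi^{-1}(Z)$ in at most one non-trivial component, and I must check that such an $R$ is also extreme \emph{in $D$}. The point is that the boundary arc of $R$ not lying in $\partial D_0$ lies along $\partial D$, and the portion of $\partial R$ along $Q$ contributes only to the $\Gamma'$ (hence ignorable) part plus possibly the single allowed $Z$-component; since $Q\subset \Gamma_0$, the arc along $Q$ meets $\psi^{-1}(Z)$ only in $\Gamma'$-points, so it does not increase the count of non-trivial $Z$-components in $S$.

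Carrying this out for two distinct outermost components $Z_0$ and $Z_1$ — which exist by Lemma~\ref{lem:outer} — yields two $2$-cells extreme with respect to $Z$ in $D$, provided these two $2$-cells are distinct. Distinctness follows because $Z_0$ and $Z_1$ lie in different components of $D$ after cutting along the respective separating paths, so the extreme $2$-cells produced lie in disjoint subdiagrams $D_0$ and $D_1$ (one can choose the cuts so that $D_0$ and $D_1$ are disjoint, since $Z_0$ and $Z_1$ are separated in $T'$ by an interior vertex). The main obstacle I anticipate is the bookkeeping in the previous paragraph: verifying that a $2$-cell extreme in the cut-down diagram $D_0$ remains extreme in $D$, i.e.\ that reattaching the arc $Q$ to $\partial R$ cannot create a second non-trivial $Z$-component in the complementary path $S$. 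This requires knowing precisely that $Q$ runs through $\Gamma_0$ and not through $Z'$, which is exactly the ``moreover'' clause of Lemma~\ref{lem:outer}; and it requires the case analysis of where along $\partial R$ the arc $Q$ meets it (interior of $S$ versus at the endpoints), using that isolated points of intersection are ignored in the definition of extreme.
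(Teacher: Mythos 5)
Your overall strategy is the same as the paper's: use Lemma~\ref{lem:outer} to produce two outermost components of $Z'$, associate to each a subdiagram with connected $Z$-preimage, apply Lemma~\ref{lem:extreme1} there, and argue that extremeness survives reattachment. The gap is in the construction of the subdiagram $D_0$. You form $D_0$ by cutting $D$ along the arc $Q$, i.e.\ taking the closure of the component of $D\setminus Q$ containing $Z_0$. But $Q$ lies in $\Gamma'=\psi^{-1}(\Gamma)$, and for a windmill the separating graph meets $X^1$ only in a finite set of points, so the edges of $\Gamma'$ run through the \emph{interiors} of the $2$-cells of $D$. Cutting along $Q$ therefore slices $2$-cells into pieces: the resulting region is not a subcomplex of $D$, its boundary is not a combinatorial path in $X$, and the cells adjacent to the cut do not map onto $2$-cells of $X$. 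Consequently ``minimal area,'' ``non-singular disc diagram over $X$,'' and ``extreme $2$-cell'' are not even defined for your $D_0$, and Lemma~\ref{lem:extreme1} cannot be applied to it as stated. (Also, $Q$ is a properly embedded arc through the interior of $D$ with endpoints on $\partial D$, not a ``boundary arc.'') The same confusion surfaces in your final bookkeeping: you argue that the portion of $\partial R$ along $Q$ meets $\psi^{-1}(Z)$ ``only in $\Gamma'$-points, so it does not increase the count.'' Since $\Gamma\subset Z$, a non-trivial arc of $\Gamma'$ inside $S$ \emph{would} count as a non-trivial component of $S\cap\psi^{-1}(Z)$ — only isolated points are ignored in Definition~\ref{def:extreme}. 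Genuine $2$-cells of $D$ are safe precisely because their boundaries lie in $D^1$ and hence meet $\Gamma'$ in isolated points; the problematic objects are exactly the cut-open partial cells your construction creates, whose boundaries contain arcs of $Q$.

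The repair is the technical heart of the paper's proof, and it is what your proposal is missing. Instead of cutting along $Q$, take $D_0$ to be the union of the \emph{closed} $2$-cells of $D$ that meet $Z_0$; then $D_0\cap Z'=Z_0$ because each $2$-cell meets $Z'$ in a connected set (this also makes the distinctness of the extreme cells coming from different outermost components automatic, with no need to choose disjoint cuts). One must then verify, using Corollary~\ref{cor:connected}, that $D_0$ is a non-singular disc diagram attached to the rest of $D$ along a path $Q'\subset Y'\cup\Gamma'$ in $D^1$, namely the portion of $\partial D_0$ with the same endpoints as $Q$ that avoids $Z_0$. With that in hand, Lemma~\ref{lem:extreme1} applies to $D_0$, a single $2$-cell attached along such a $Q'$ is automatically extreme, and when $D_0$ has two extreme $2$-cells at most one can lose its status upon reattachment along the single path $Q'$, because $Q'$ avoids $Z'$ and meets the boundaries of the $2$-cells of $D_0$ only in $Y'\cup\Gamma'$. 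Your proposal gestures at these last points but, because it works with the cut-open region rather than this subcomplex, it never establishes the hypotheses needed to quote Lemma~\ref{lem:extreme1}, which is where the real work lies.
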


\begin{proof}
  Let $Z'$ be the modified preimage of $Z$ in $D$.  By
  Lemma~\ref{lem:extreme1}, we may assume $Z'$ is disconnected and by
  Lemma~\ref{lem:outer}, $D$ must contain at least two outermost
  components.  If each outermost component $Z_0$ contributes at least
  one extreme $2$-cell, we will be done.

  Let $D_0$ be the union of the $2$-cells of $D$ which intersect $Z_0$
  non-trivially.  Notice that $D_0 \cap Z' = Z_0$ since the
  intersection of $Z'$ with each $2$-cell is connected.  We claim that
  $D_0$ is a nonsingular disc diagram which is attached to the rest of
  $D$ along a path $Q'$ contained in $Y'\cup \Gamma'$.  To see that
  $D_0$ is simply-connected, suppose not.  Then there is a simple
  closed path in $\partial D_0 \cap (Y'\cup \Gamma')$ which bounds a
  subdiagram of $D$ (it cannot contain points in $Z'$ since $Z'$ is
  open in $D$).  This subdiagram contains at least one $2$-cell and
  hence a point in $Z'$.  And finally the boundary of this component
  of $Z'$ must be an essential cycle in $\Gamma'$ contradicting
  Corollary~\ref{cor:connected}.  Thus $D_0$ is a disc diagram.  Since
  it is a union of $2$-cells and $Z_0$ is open in $D$, it is also
  non-singular.  Finally, by Lemma~\ref{lem:outer}, $Z_0$ can be
  separated from the rest of $Z'$ by a path $Q$ in $\Gamma'$.  Let
  $Q'$ be the portion of $\partial D_0$ which has the same endpoints
  as $Q$ and which avoids $Z_0$.  The path $Q'$ exists since $Q$
  separates and $Z'\cap D_0$ is connected.

  By Lemma~\ref{lem:extreme1} $D_0$ is either a single $2$-cell or it
  contains at least two $2$-cells which are extreme with respect to
  $Z$.  Since a single $2$-cell attached to the rest of $D$ along a
  path $Q'$ in $Y' \cup \Gamma'$ is always extreme with respect to
  $Z$, we may assume $D_0$ has at least two extreme $2$-cells.
  Finally, when such a $D_0$ is attached to the rest of the diagram
  along a path $Q'$ in $Y' \cup \Gamma'$, at most one of these
  $2$-cells loses its status as an extreme $2$-cell, and the proof is
  complete.
\end{proof}

Finally, using Lemma~\ref{lem:dangling} we can remove the assumption
that $D$ is non-singular.

\begin{thm}[Extreme $2$-cells exist]\label{thm:extreme}
  If $X$ is a $2$-complex, $Z$ is a splitting windmill in $X$ with
  complement $Y$, $P\to X$ is a non-trivial null-homotopic immersed
  combinatorial path, and $\psi:D\to X$ is a minimal area disc diagram
  having $P$ as its boundary cycle, then either $D$ consists of a
  single $2$-cell or $D$ contains at least two $2$-cells which are
  extreme with respect to $Z$.
\end{thm}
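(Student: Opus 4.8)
The plan is to bootstrap from Lemma~\ref{lem:extreme2} (which handles the non-singular case) using Lemma~\ref{lem:dangling} to reduce an arbitrary minimal area disc diagram to its non-singular pieces. First I would dispose of the trivial cases: if $D$ is itself non-singular, then Lemma~\ref{lem:extreme2} gives the conclusion directly. So assume $D$ is singular. By Lemma~\ref{lem:dangling}, $D$ then contains at least two dangling subdiagrams $D_1$ and $D_2$, each a non-singular subdiagram attached to the rest of $D$ at a single $0$-cell. The idea is that each $D_i$ will contribute at least one $2$-cell which is extreme with respect to $Z$, and since there are two such dangling subdiagrams, we will obtain the required two extreme $2$-cells in $D$.

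The key step is to analyze a single dangling subdiagram $D_0$, attached to the rest of $D$ at a vertex $v$. Since $D_0$ is non-singular and is itself a minimal area disc diagram over $X$ (any smaller-area diagram for its boundary cycle would reduce the area of $D$), and its boundary cycle is a non-trivial immersed null-homotopic loop (it is a subpath of $\partial D$, hence immersed, and it bounds $D_0$), Lemma~\ref{lem:extreme2} applies to $D_0$: either $D_0$ is a single $2$-cell, or $D_0$ contains at least two $2$-cells extreme with respect to $Z$. In the first case, the single $2$-cell $R$ of $D_0$ has $\partial R$ equal to $\partial D_0$, which is entirely a subpath of $\partial D$ except for the point $v$; so writing $\partial R$ as the concatenation of the trivial path at $v$ and the rest, $R$ is trivially extreme with respect to $Z$ (the path $S$ is trivial, so $S \cap \psi^{-1}(Z)$ has no non-trivial component). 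In the second case, $D_0$ has two $2$-cells extreme with respect to $Z$ inside $D_0$; I need to check that at least one of them remains extreme when viewed inside $D$. Since $D_0$ meets the rest of $D$ only at the single point $v$, the boundary portion $Q$ of $\partial R$ lying in $\partial D_0$ continues to lie in $\partial D$ (a single point $v$ cannot disconnect $Q$ from $\partial D$ in a way that matters, and at worst one of the two extreme $2$-cells could have $v$ as an interior point of its boundary subpath $Q$). This is exactly the same bookkeeping as the last sentence of the proof of Lemma~\ref{lem:extreme2}: passing to a larger diagram along a single point can demote at most one of the two extreme $2$-cells, so at least one survives.

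I expect the main obstacle to be the careful verification that "extreme with respect to $Z$" is inherited correctly when passing from the dangling subdiagram $D_0$ to the ambient diagram $D$ — specifically, confirming that the relevant boundary subpath $Q$ of a $2$-cell, which lies in $\partial D_0$, still lies in $\partial D$ (or can be rechosen to do so), and that the condition on $S \cap \psi^{-1}(Z)$ is unaffected since $S$ lies in the interior of $D_0$ where nothing has changed. The cut vertex $v$ is the only place where $\partial D_0$ fails to be part of $\partial D$, and since $v$ is a single $0$-cell it is an "isolated point" that the definition of extreme explicitly ignores. Once this inheritance is established for one dangling subdiagram — yielding one extreme $2$-cell — applying it to the two dangling subdiagrams guaranteed by Lemma~\ref{lem:dangling} gives two extreme $2$-cells in $D$, completing the proof. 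I would also note that if $D$ happens to be a single $2$-cell, it is non-singular and we are in the already-handled case, so the dichotomy in the statement is correctly reproduced.
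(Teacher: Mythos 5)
Your proposal is correct and follows essentially the same route as the paper's own proof: reduce to the singular case, invoke Lemma~\ref{lem:dangling} to obtain two dangling subdiagrams, apply Lemma~\ref{lem:extreme2} to each (noting a single $2$-cell attached at a point is automatically extreme), and observe that attachment at a single $0$-cell can demote at most one of two extreme $2$-cells. Your extra bookkeeping about minimality and immersion of the subdiagram's boundary is a reasonable elaboration of details the paper leaves implicit.
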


\begin{proof}
  We may assume that $D$ is singular by Lemma~\ref{lem:extreme2}, so
  $D$ must contain at least two dangling subdiagrams by
  Lemma~\ref{lem:dangling}.  If each dangling subdiagram $D'$
  contributes at least one extreme $2$-cell, we will be done.  By
  Lemma~\ref{lem:extreme2} $D'$ is either a single $2$-cell or it
  contains at least two $2$-cells which are extreme with respect to
  $Z$.  Since a single $2$-cell attached to the rest of $D$ at a point
  is always extreme with respect to $Z$, we may assume $D'$ has at
  least two extreme $2$-cells.  Finally, when such a $D'$ is attached
  to the rest of the diagram at a point, at most one of these
  $2$-cells loses its status as an extreme $2$-cell, and the proof is
  complete.
\end{proof}

When the hypotheses of Theorem~\ref{thm:extreme} hold, we will say
that \emph{disc diagrams over $X$ have extreme $2$-cells}.

\section{Applications to coherence}\label{sec:apps}

In this final section we combine the constructions $\partial A$ and
$\eth A$ with Theorem~\ref{thm:extreme} to show that various groups
are coherent.  Throughout this section let $X$ be a $2$-complex, let
$A$ be a portion of its $1$-skeleton, let $\Gamma$ be either $\partial
A$ or $\eth A$, and let $Y$ and $Z$ be as defined in
Definitions~\ref{def:pA} or~\ref{def:eA}, respectively.  In order to
apply Theorem~\ref{thm:extreme}, we need to know that $Z$ is a
splitting windmill.  As we noted in the definitions of $\partial A$
and $\eth A$, there are easy conditions on $A$ which ensure that $Z$
is a windmill, so the main issue becomes whether $\Ess(\Gamma)\to X$
is $\pi_1$-injective.  Moreover, since the inclusion map
$\Ess(\Gamma)\to X$ can be homotoped to a map $\Ess(\Gamma)\to A
\subseteq X$ by pushing the regular neighborhood of $A$ back into $A$
in the obvious way, it is sufficient to establish that this new map is
$\pi_1$-injective and that the inclusion $A\to X$ is
$\pi_1$-injective.  Here are three common situations where $A\to X$ is
known to be $\pi_1$-injective.

\begin{thm}[Freiheitsatz]\label{thm:frei}
  Let $X$ be the standard $2$-complex of a presentation whose single
  relator is reduced and cyclically reduced.  If $A$ is a non-empty
  subgraph of $X^1$ that omits at least one $1$-cell contained in the
  boundary cycle of the relator, then the inclusion $A\hookrightarrow
  X$ is $\pi_1$-injective.
\end{thm}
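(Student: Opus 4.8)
The plan is to recognize this statement as the topological incarnation of Magnus's classical Freiheitssatz and to reduce it to the standard group-theoretic form before citing that theorem. Write the presentation as $\langle a_1,\dots,a_n\mid W\rangle$ with $W$ reduced and cyclically reduced, so that $X^1$ is a wedge of $n$ circles at the unique $0$-cell and $\pi_1 X$ is the one-relator group. A non-empty subgraph $A\subseteq X^1$ necessarily contains the $0$-cell together with some non-empty subset of the circles; writing $S\subseteq\{a_1,\dots,a_n\}$ for the corresponding set of generators, $\pi_1 A$ is the free group $F(S)$ and the induced homomorphism $\pi_1 A\to\pi_1 X$ sends each generator in $S$ to itself. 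The hypothesis that $A$ omits a $1$-cell in $\partial W$ says precisely that some generator $a_j$ occurring in $W$ lies outside $S$.

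First I would reduce to the extremal case $S=\{a_1,\dots,a_n\}\setminus\{a_j\}$. This costs nothing: any smaller $A$ is a subgraph of the maximal one $A'$, and $\pi_1$-injectivity of $A'\hookrightarrow X$ then forces $\pi_1$-injectivity of the composite $A\hookrightarrow A'\hookrightarrow X$. One may also discard any generator that does not occur in $W$, since then $\pi_1 X$ splits as a free product with $\langle a_i\rangle$ and the corresponding circle retracts off both $A$ and $X$; thus we may assume that every generator occurs in $W$ and that $a_j$ is the single omitted one. With these reductions the claim becomes exactly: the subgroup of $\langle a_1,\dots,a_n\mid W\rangle$ generated by all generators except $a_j$ is free on that set — which is the Freiheitssatz.

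At this point I would simply invoke Magnus's theorem, for instance in the form presented in \cite{LySch77}; note that its statement and proof require only that $W$ be cyclically reduced, so the case where $W$ is a proper power is covered without change. The one genuine difficulty, which citing the theorem lets us avoid, is the induction at the heart of Magnus's argument: one either locates a generator of exponent sum zero in $W$ and exhibits the group as an HNN extension of a one-relator group with a strictly shorter relator after Magnus rewriting, or else performs a Nielsen-type change of generators to manufacture such a generator, inducting on relator length. Reproducing that argument lies well outside the scope of this paper, so the proof here consists only of the translation and the reductions above, followed by the citation.
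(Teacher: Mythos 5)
Your translation from the topological statement to the classical group-theoretic Freiheitssatz, followed by the citation of Magnus (via \cite{LySch77}), is correct and is exactly how the paper treats this result: it is stated without proof as Magnus's classical theorem, with \cite{LySch77} as the reference for it and its staggered generalization. Your reductions (passing to the maximal subgraph omitting one occurring generator, splitting off unused generators) are sound and merely make explicit what the paper leaves implicit.
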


The Freiheitssatz for one-relator groups, which was first proven by
Magnus, can be generalized in various ways. One of these
generalizations involves the notion of staggered $2$-complex (see
\cite{LySch77} or \cite{HruskaWise-Torsion}).

\begin{defn}[Staggered]
  Let $X$ be a $2$-complex with a subgraph $A\subset X^1$ such that
  each $2$-cell of $X$ contains a $1$-cell not in $A$ on its
  boundary. Suppose that there is a linear ordering on the $1$-cells
  of $X$ which are not in $A$, and a linear ordering on the $2$-cells
  of $X$. For each $2$-cell $\alpha$, we let $\max(\alpha)$ and
  $\min(\alpha)$ denote the highest and lowest $1$-cells not in $A$
  which occur in $\partial \alpha$. We then say that the pair $X,A$ is
  \emph{staggered} provided that if $\alpha$ and $\beta$ are $2$-cells
  with $\alpha < \beta$ then $\max(\alpha) < \max(\beta)$ and
  $\min(\alpha) < \min(\beta)$.
\end{defn}

The following generalization of the Freiheitssatz is proven in
\cite{LySch77} (see also \cite{HruskaWise-Torsion}).

\begin{thm}\label{thm:staggered}
  If $X,A$ is staggered (for some linear orderings) then the inclusion
  map $A\hookrightarrow X$ is $\pi_1$-injective on all components.
\end{thm}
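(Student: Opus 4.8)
The plan is to prove the contrapositive-flavored version of the statement by the classical method of Magnus, organized as an induction on the number of $2$-cells of $X$. First I would reduce to the case that $X$ is finite and connected: a loop in $A$ together with a disc diagram realizing a null-homotopy of it in $X$ involve only finitely many cells, so $X$ may be replaced by a finite subcomplex containing them, which inherits a staggered structure (restrict the two orderings) and still contains the relevant component $A^{\circ}$ of $A$. Since $A$ is a graph, a reduced edge-loop in $A^{\circ}$ is null-homotopic in $A^{\circ}$ exactly when it is trivial, so the theorem is equivalent to: $\pi_1 A^{\circ}\to\pi_1 X$ is injective for each component $A^{\circ}$ of $A$ — and this is what I attack by induction on the number of $2$-cells of $X$.

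For the base case $X$ is a graph and the inclusion of a connected subgraph into a graph is $\pi_1$-injective. For the inductive step, let $\alpha_0$ be the top $2$-cell and let $z$ be the highest $1$-cell not in $A$ occurring on some $2$-cell boundary; the staggering hypothesis forces $z$ to occur on $\partial\alpha_0$ and on the boundary of no other $2$-cell (if $z\in\partial\beta$ with $\beta<\alpha_0$ then $z\le\max(\beta)<\max(\alpha_0)=z$). If $z$ occurs exactly once on $\partial\alpha_0$ then $z$ is a free face, $X$ collapses to $X'=X\setminus(z\cup\alpha_0)$, and $X'$ is staggered, contains $A$, and has one fewer $2$-cell; the inductive hypothesis gives $\pi_1 A^{\circ}\hookrightarrow\pi_1 X'$, and composing with the isomorphism $\pi_1 X'\cong\pi_1 X$ induced by the collapse finishes this case. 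This special situation already recovers Theorem~\ref{thm:frei} when there is a single relator using its highest non-$A$ letter once; the \emph{substantive case} is when $z$ occurs on $\partial\alpha_0$ with multiplicity at least two.

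In that case I would first use Tietze transformations to arrange that the exponent sum of $z$ in $\partial\alpha_0$ is zero: if it is nonzero and $\partial\alpha_0$ contains another $1$-cell not in $A$, a change of the generators outside $A$ zeroes it out while leaving $A$ untouched, and if $z$ is the only $1$-cell of $\partial\alpha_0$ outside $A$ and its exponent sum is nonzero, then $\alpha_0$ splits off so that $\pi_1 X$ is an amalgamated product or HNN extension with $\pi_1 X'$ as a factor, $X'=X\setminus(z\cup\alpha_0)$ again smaller and staggered. Once the exponent sum of $z$ is zero I would pass to the infinite cyclic cover $\hat X\to X$ dual to the homomorphism $\pi_1 X\to\mathbb{Z}$ sending $z$ to $1$ and all other $1$-cells to $0$. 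Each lift of $\alpha_0$ is again a $2$-cell, since its boundary has total $z$-exponent $0$ and so lifts to a loop; the subgraph $A$ lifts to a disjoint union of copies of $A$; and a reduced edge-loop $P$ in $A^{\circ}$ that is null-homotopic in $X$ lifts (because $P$ has zero image in $\mathbb{Z}$) to a reduced loop $\hat P$ in a copy $\hat A^{\circ}$ that is null-homotopic in $\hat X$. A compact disc diagram for $\hat P$ lies in a finite window $W\subset\hat X$ spanning finitely many sheets, and $W$ carries a staggered structure relative to the lifted copies of $A$ in which the lifts of $\alpha_0$ sit on top, ordered by the highest sheet at which each of them meets $z$, and the lifts of the other $2$-cells are ordered as in $X$, sheet by sheet. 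Applying the inductive hypothesis in $W$ gives that $\hat P$ is trivial in $\pi_1\hat A^{\circ}\cong\pi_1 A^{\circ}$, hence $P$ is trivial, completing the step.

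The hard part is precisely this last maneuver: the cover $\hat X$ has \emph{more} $2$-cells than $X$, so a naive induction on the number of $2$-cells is not well-founded, and one must instead identify a complexity for which passing to the cover genuinely simplifies the data — this is exactly the content of Magnus's rewriting process and the reason the staggered Freiheitssatz is a real theorem. Concretely one has to isolate the right invariant (a lexicographic combination of the number of $2$-cells with a measure of how the $z$-occurrences and the $A$-subpaths of $\partial\alpha_0$ are spread across the sheets), check that the finite window $W$ is genuinely staggered with respect to the lifted orderings, and verify along the amalgam/HNN decomposition coming from the cover that the edge and vertex subgroups inject by the inductive hypothesis. Packaging all of these covers uniformly is what Howie's tower machinery accomplishes, and invoking that would be the cleanest route to a self-contained proof; in the present article the statement is simply quoted from \cite{LySch77}.
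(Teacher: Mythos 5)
The paper does not prove this statement at all: it is quoted from \cite{LySch77} (see also \cite{HruskaWise-Torsion}), so there is no internal argument to compare against. Judged on its own terms, your sketch correctly identifies the classical Magnus-style route (reduce to a finite connected subcomplex, locate the top $2$-cell $\alpha_0$ and its highest non-$A$ letter $z$, collapse a free face if $z$ occurs once, otherwise normalize the exponent sum of $z$ and pass to the infinite cyclic cover), but as written it has a genuine gap, and you name it yourself: the induction is on the number of $2$-cells, while the finite window $W$ in the cover has \emph{more} $2$-cells than $X$, so the inductive hypothesis cannot be invoked there. Saying that one must ``isolate the right invariant (a lexicographic combination \dots)'' is not a proof step; finding a complexity that strictly decreases under this passage, and proving it decreases, is precisely the mathematical content of the staggered Freiheitssatz (Magnus rewriting, or Howie's towers). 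Deferring that to \cite{LySch77} or to Howie means the argument is an outline of the known proof rather than a proof.

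Two further steps are asserted but not verified and would need real work. First, the claim that $W$ ``carries a staggered structure'' with the lifts of $\alpha_0$ on top, ordered by the highest sheet meeting $z$: staggering requires \emph{both} $\max(\alpha)<\max(\beta)$ and $\min(\alpha)<\min(\beta)$, one must say which $1$-cells of $W$ play the role of the non-$A$ edges (the lifted $z$-edges join adjacent sheets and must be dealt with, e.g.\ absorbed into the excluded subgraph or collapsed along a forest), and one must check how lifts of $\alpha_0$ interleave with the sheet-by-sheet lifts of the lower $2$-cells. Second, in the case where $z$ is the only non-$A$ letter of $\partial\alpha_0$ and its exponent sum is nonzero, the assertion that ``$\alpha_0$ splits off so that $\pi_1X$ is an amalgamated product or HNN extension with $\pi_1X'$ as a factor'' is unjustified; injectivity of $\pi_1X'\to\pi_1X$ in that situation is itself a Freiheitssatz-type statement (handled classically by the root-adjunction trick, which again must be checked to preserve staggering). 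Likewise the Tietze change of non-$A$ generators used to zero out the exponent sum alters every $2$-cell containing the modified letter, so preservation of the staggered structure needs an argument. In short: right strategy, but the well-foundedness of the induction and these verifications are exactly the hard part, and they are not supplied.
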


Our third example is an immediate corollary of the fundamental theorem
of small cancellation theory.  See \cite{LySch77} or \cite{McWi-fl}
for small-cancellation definitions and further details.

\begin{thm}\label{thm:small} 
  Let $X$ be a $C(6)$ \ $[C(4)-T(4)]$ small-cancellation complex, and
  let $A$ be a subgraph of $X^1$.  If there does not exist a path $S$
  in $A$ and a path $Q$ in $X$ such that $Q$ is the concatenation of
  at most $3$ pieces $[$$2$ pieces$]$ in $X$ and $QS$ is the attaching
  map of a $2$-cell of $X$, then $A\hookrightarrow X$ is
  $\pi_1$-injective.
\end{thm}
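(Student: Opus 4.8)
The plan is to reduce the statement to the standard machinery of small-cancellation theory by using a minimal-area disc diagram to detect a failure of $\pi_1$-injectivity. Suppose for contradiction that $A \hookrightarrow X$ is not $\pi_1$-injective. Then there is a non-trivial immersed closed path $P \to A$ that is null-homotopic in $X$, and we may choose such a $P$ together with a minimal-area disc diagram $\psi: D \to X$ having $P$ as its boundary cycle. By minimality, $D$ contains no cancellable pair (Lemma~\ref{lem:minimal}), so $D$ is a \emph{reduced} disc diagram in the usual sense of small-cancellation theory. Since $P$ is immersed and non-trivial, $D$ contains at least one $2$-cell; we may also assume $D$ is non-singular by passing to an appropriate non-singular subdiagram (if $D$ is singular, a dangling subdiagram in the sense of Lemma~\ref{lem:dangling} still has boundary word tracing through $P$ and $\Gamma'$-type arcs, but in fact it suffices to work with a non-singular subdiagram attached at a cut vertex whose boundary cycle still lies in $A$ except along one arc — I would phrase this using Lemma~\ref{lem:dangling} directly).

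Next I would invoke the fundamental theorem of small-cancellation theory. For a $C(6)$ complex (respectively a $C(4)$-$T(4)$ complex), a reduced non-singular disc diagram $D$ with at least one $2$-cell and at least two $2$-cells must contain a $2$-cell $R$ whose boundary is the concatenation of an \emph{outer path} $Q$ lying in $\partial D$ and an \emph{inner path} $S'$ that is a concatenation of at most $3$ pieces (respectively at most $2$ pieces) — this is the classical Greendlinger-type corollary of the area/perimeter inequality $\sum (6 - i(R)) \geq 6$ for interior degrees (respectively the $T(4)$ analogue). If $D$ consists of a single $2$-cell $R$, then $\partial R$ maps to a closed path in $A$, contradicting that the attaching maps of $2$-cells are immersions and that $P$ is non-trivial — or more precisely, $\partial R = P$ would be a closed immersed path in $A$ bounding a single cell, which is excluded since it would make the relator word a word in $A$ and one checks this contradicts the hypothesis with $Q$ empty. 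So $D$ has at least two $2$-cells and we obtain such an $R$.

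Now comes the key step: the outer path $Q$ of $R$ lies in $\partial D$, hence maps into $A$ since $\psi(\partial D) = P \subseteq A$. Therefore $\partial R \to X$ factors as the concatenation $Q \cdot S'$ where $Q$ is a (possibly trivial, possibly a single edge or longer) path in $A$ and $S'$ is a concatenation of at most $3$ pieces (respectively $2$ pieces). This is exactly the forbidden configuration in the hypothesis — with $S$ playing the role of $Q$ and $Q$ (in the theorem's notation) playing the role of $S'$ — giving the desired contradiction and completing the proof.

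I expect the main obstacle to be the bookkeeping around degenerate cases: a single-$2$-cell diagram, diagrams where the outer path $Q$ of the extreme cell is trivial or reduces to a single vertex, and the passage from a possibly-singular $D$ to a non-singular piece while keeping the boundary inside $A$ (apart from one controlled arc). The genuinely substantive input — that reduced small-cancellation diagrams have a cell with a short inner path — I would simply cite from \cite{LySch77} or \cite{McWi-fl} rather than reprove.
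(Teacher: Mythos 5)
Your argument is correct and takes essentially the same route the paper intends: the paper gives no written proof, presenting Theorem~\ref{thm:small} as an immediate corollary of the fundamental theorem of small cancellation theory (citing \cite{LySch77} and \cite{McWi-fl}), and your reduction to a minimal-area (hence reduced) disc diagram followed by the Greendlinger/shell statement that a boundary $2$-cell has inner path a concatenation of at most $3$ pieces $[2$ pieces$]$ is exactly how that corollary is obtained. The degenerate cases you flag are handled just as you suggest: a trivial $Q$ counts as a concatenation of zero pieces, and a dangling subdiagram of a reduced diagram is still reduced with boundary mapping entirely into $A$, so the same contradiction applies.
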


Thus in each of these three contexts we merely need to check that
$\Ess(\Gamma)\to A$ is $\pi_1$-injective in order for
Theorem~\ref{thm:extreme} to apply.

\subsection{Combinatorial descriptions}

To understand the situation, we now provide a combinatorial
description of $\partial A\rightarrow A$ and $\eth A\to A$.

\begin{defn}\label{def:combinatorial injectivity}
  Let $X$ be the standard $2$-complex of the presentation
  \[\langle a_1,\ldots,a_p, b_1,\ldots, b_q \mid W_1,\ldots,
  W_r\rangle\] 
  Let $A$ and $B$ be the subgraphs of $X^1$ corresponding to the $a_i$
  and $b_i$ edges.  For each $i$, the word $W_i$ can be written
  uniquely in the form
  \[W_{i0}b_{i1}^{\epsilon_{i1}}W_{i1}b_{i2}^{\epsilon_{i2}} W_{i2}
  \ldots b_{is_i}^{\epsilon_{is_i}}W_{is_i}\]
  where each $\epsilon_{ij}$ is $\pm 1$, each $b_{ij}$ is a generator
  in $B$, and each word $W_{ij}$ is a (possibly empty) word in the
  generators of $A$.  By replacing $W_i$ with one of its cyclic
  conjugates we can assume that $W_{i0}$ is empty.  We now form a
  graph $\partial A$ from the set of $W_i$ words as follows: For each
  $i$ we form a $2s_i$-sided polygon whose edges are directed and
  labeled by the elements $b_{ij}^{\epsilon_{ij}}$ and $W_{ij}$ in
  exactly the same order as in $W_i$.  For each $k$ we identify edges
  which are labeled by $b_k$ according to their orientations.  Finally
  for each $k$, we remove the interior of the edge labeled~$b_k$.  The
  resulting graph $\partial A$ has $2q$ vertices and $\sum_{i=1}^r
  s_i$ edges.

  By assumption, each word $W_{ij}$ is a word in the free group
  generated by $A$, and there is an induced label-preserving map from
  $\partial A$ to $A$.  Note that the edges which are labeled by the
  trivial element are mapped to vertices.  The graph $\partial A$ is
  {\it injective} if this map is $\pi_1$-injective on each component.
  An important special case where $\partial A$ is injective is when
  the words $W_{ij}$ form a basis for a subgroup of the free group
  generated by $A$.
\end{defn}

\begin{defn}[Generator Graphs]\label{def:injective}
  Let $W$ be an arbitrary word and let $t$ be one of the generators it
  contains.  If we single out all of the instances of $t$ in $W$ then
  we can write $W$ uniquely in the form $W_0t^{\epsilon_1}W_1
  t^{\epsilon_2} W_2 \ldots t^{\epsilon_r}W_r$ where each $\epsilon_i$
  is an integer and each word $W_i$ is a non-empty word which does not
  contain the letter $t$.  If we replace $W$ with one of its cyclic
  conjugates we can assume that $W_0$ is empty.  We now form a graph
  $\eth t(W)$, called the {\it generator graph} of $W$ for the
  generator $t$, as follows: We begin with the $|W|$-sided polygon
  whose edges are directed and labeled by the generators so that the
  label of the entire boundary is the word $W$.  Next we identify all
  of the $t$-edges according to their orientations, and finally we
  remove the interior of the unique edge labeled~$t$ in the quotient.
  The resulting graph will be $\eth t(W)$.  Notice that it contains
  either one or two connected components.

  More generally, let $B$ and $C$ be disjoint sets of letters and let
  $W$ be a word of the form $W = B_1C_1B_2C_2\ldots B_kC_k$ where
  $B_i$ and $C_i$ are non-empty reduced words using generators from
  $B$ and $C$ respectively.  The generator graph $\eth B(W)$ is formed
  as follows: Take the $\size{W}$-sided polygon as before, and
  identify all of the instances of the generator $b \in B$ according
  to their orientations, and repeat this for each generator in $B$
  that occurs in $W$.  Finally, remove the interior of the edges
  labeled by elements of $B$.  The resulting graph is $\eth B(W)$.
  This more general graph may contain quite a few components.

  Since each $C_i$ is a word in the free group generated by $C$, there
  is an induced label-preserving map from $\eth B(W)$ to the bouquet
  of circles labeled by the $c_i \in C$.  The graph $\eth B(W)$ is
  {\it injective} if this map is $\pi_1$-injective on each component.
  An important special case where $\eth B(W)$ is injective is when the
  words $C_i$ form a basis for a subgroup of the free group generated
  by $C$.
\end{defn}

\subsection{Applications of $\partial A$}
Here is an application to coherence of one-relator groups.

\begin{thm}\label{thm:1rel-extreme}
  Consider a one-relator group of the form
  $$G=\langle a_1, a_2,\dots, b \mid \big( b^{\epsilon_1}W_1 b^{\epsilon_2} W_2
  \ldots b^{\epsilon_r}W_r \big)^n \rangle $$ where for each $i$,
  $\epsilon_i =\pm1$, $n$ is arbitrary, and $W_i$ is a word in the
  ${a_i}$.  Suppose that $\partial A$ is injective.  Let $P$ denote a
  reduced word representing the trivial element, then $P$ contains a
  subword $Q$ such that $QS$ is equal to a cyclic conjugate of $W^{\pm
    n}$ and $b^{\pm 1}$ occurs at most once in $S$.  As a consequence,
  $G$ is coherent.
\end{thm}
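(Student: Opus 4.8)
The plan is to realize this statement as a direct application of the extreme $2$-cell machinery developed in Section~\ref{sec:extreme}, specialized through the $\partial A$ construction of Definition~\ref{def:pA}. First I would set $X$ to be the standard $2$-complex of the presentation, so $X$ has a single $2$-cell $R$ whose attaching map reads $W^n$ where $W = b^{\epsilon_1}W_1 \cdots b^{\epsilon_r} W_r$. Let $A\subset X^1$ be the subgraph consisting of the edges labeled by the $a_i$ (i.e.\ everything except the $b$-edge), and let $\Gamma = \partial A$ with $Y, Z$ the complementary subcomplexes as in Definition~\ref{def:pA}. Two things must be checked to invoke Theorem~\ref{thm:extreme}. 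The first is that $Z$ is a windmill: by Definition~\ref{def:pA} this requires $\phi^{-1}(A)$ to be disconnected in $R$, which holds because each $b^{\pm1}$ occurrence in $W^n$ cuts $\partial R$, and since $r\ge 1$ and $n\ge 1$ there are at least $n\cdot r \ge 2$ such occurrences (the case $r=1$, $n=1$ being degenerate and handled separately, as it gives a single syllable and the statement is trivial). The second is that $Z$ is a \emph{splitting} windmill, i.e.\ $\Ess(\Gamma)\hookrightarrow X$ is $\pi_1$-injective on components. By the reduction in the opening of Section~\ref{sec:apps}, it suffices that $A\hookrightarrow X$ be $\pi_1$-injective and that the induced map $\Ess(\Gamma)\to A$ be $\pi_1$-injective.

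For the $\pi_1$-injectivity of $A\hookrightarrow X$, I would apply Theorem~\ref{thm:frei} (the Freiheitssatz): the single relator $W^n$ is reduced and, after replacing $W$ by a cyclic conjugate, cyclically reduced, and $A$ omits the $1$-cell labeled $b$, which occurs in $\partial R$; so the hypotheses are met. For the map $\Ess(\Gamma)\to A$, I would unwind Definition~\ref{def:combinatorial injectivity}: the graph $\partial A$ built from the syllable decomposition of $W$ maps to $A$ by the labels $W_{ij}$, and this is exactly the map whose $\pi_1$-injectivity is the hypothesis ``$\partial A$ is injective.'' One small point to address is the passage from $\partial A$ to $\Ess(\Gamma)$: because $R$ is attached by the proper power $W^n$, the windmill structure on $R$ respects the $n$-fold symmetry, so $\Ess(\Gamma)$ is obtained from the full $\Gamma\cap R$ by keeping only ``$\tfrac1n$-th'' of it — which is precisely the graph $\partial A$ associated to the single period $W$ rather than to $W^n$. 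Hence ``$\partial A$ injective'' is exactly ``$\Ess(\Gamma)\to A$ is $\pi_1$-injective,'' and $Z$ is a splitting windmill.

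Now Theorem~\ref{thm:extreme} applies: given the reduced word $P$ representing the trivial element, take a minimal area disc diagram $\psi:D\to X$ with $\partial D = P$. (If $P$ is trivial or $D$ has area zero there is nothing to prove, and $P$ being reduced means $P\to X$ is immersed, so the hypotheses of Theorem~\ref{thm:extreme} hold.) Either $D$ is a single $2$-cell — in which case $P$ itself is a cyclic conjugate of $W^{\pm n}$ and we may take $Q=P$, $S$ trivial — or $D$ contains a $2$-cell $R'$ that is extreme with respect to $Z$. By Definition~\ref{def:extreme}, $\partial R'$ is a concatenation $QS$ with $Q$ a subpath of $\partial D = P$ and $S\cap\psi^{-1}(Z)$ having at most one nontrivial component. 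Translating through the $\partial A$ construction: the $1$-cells of $\partial R'$ lying in $Z'$ are exactly the $b$-edges together with the intervening $W_{ij}$-arcs that the windmill separates off, and $S$ meeting $Z'$ in at most one nontrivial component forces $S$ to contain at most one full syllable $b^{\epsilon_j}$, i.e.\ $b^{\pm1}$ occurs at most once in $S$. Since $\partial R'$ reads a cyclic conjugate of $W^{\pm n}$, this is the desired conclusion $QS = $ (cyclic conjugate of $W^{\pm n}$) with $b^{\pm1}$ in $S$ at most once.

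Finally, coherence of $G$ follows from the remarks of Section~\ref{sec:perimeter}: the existence of extreme $2$-cells in all minimal area disc diagrams over $X$ lets one choose weights on the sides of $R$ (giving the $b$-sides and $W_{ij}$-sides appropriate weights) so that Theorem~7.6 of \cite{McCammondWiseCoherence} applies, yielding that every finitely generated subgroup of $\pi_1 X = G$ is finitely presented. The main obstacle in writing this out carefully is the bookkeeping in the second-to-last step — matching the topological notion ``$S\cap\psi^{-1}(Z)$ has at most one nontrivial component'' from Definition~\ref{def:extreme} with the combinatorial statement ``$b^{\pm1}$ occurs at most once in $S$,'' and handling isolated points of $\psi^{-1}(\Gamma)$ (which arise when $\Gamma$ passes through $0$-cells of $X$, e.g.\ from trivial $W_{ij}$) that are explicitly ignored in both definitions; the windmill/$\partial A$ correspondence and the verification of the splitting hypothesis are comparatively routine given Theorems~\ref{thm:frei} and~\ref{thm:extreme}.
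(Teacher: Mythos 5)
Your proposal is correct and takes essentially the same route as the paper: this theorem is the intended application of the $\partial A$ windmill construction together with Theorem~\ref{thm:extreme}, with the Freiheitssatz plus the ``$\partial A$ is injective'' hypothesis supplying the splitting condition, and coherence then following from the weighting ($1$ on the $b$-sides, $0$ elsewhere) and Theorem~7.6 of \cite{McCammondWiseCoherence}. The paper's own proof is far terser---it records only the Freiheitssatz, the weight assignment, and the appeal to the perimeter-reduction hypothesis---so your write-up simply makes explicit the windmill/splitting verification, the identification of $\Ess(\Gamma)$ with the $\partial A$ graph of the period $W$, and the translation of ``extreme'' into ``at most one $b^{\pm 1}$ in $S$'' that the paper leaves implicit.
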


\begin{proof}
  By the Freiheitssatz (Theorem~\ref{thm:frei}), the $a_i$ elements
  form a basis for a free group.  Let $R$ denote the unique $2$-cell
  of $X$.  Let each side of $R$ at $b$ have weight $1$, and let each
  side of $R$ not at $b$ have weight $0$.  Then $X$ satisfies the
  $\leq$ condition for the perimeter reduction hypothesis of
  \cite[Thm~7.6]{McCammondWiseCoherence}, and is therefore coherent.
\end{proof}

We can now state a generalization of Theorem~\ref{thm:1rel-extreme} to
staggered $2$-complexes.

\begin{thm}\label{thm:relatively-staggered extreme cell}
  Let $X$ be the standard $2$-complex of the presentation \[\langle
  a_1,\ldots, a_p, t_1,\ldots, t_q \mid W_1,\ldots, W_r\rangle\] and
  let $A$ denote the subgraph of $X^1$ corresponding to the $a_i$
  edges. If $X,A$ is relatively staggered for some linear orderings
  and the inclusion $\eth A\to X$ is injective, then extreme $2$-cells
  exist in disc diagrams over $X$.
\end{thm}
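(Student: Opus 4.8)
The plan is to reduce Theorem~\ref{thm:relatively-staggered extreme cell} to Theorem~\ref{thm:extreme} by verifying that $\eth A$ determines a splitting windmill $Z$ in $X$. Recall from Definition~\ref{def:eA} that to produce a windmill from $\eth A$ we need two things: first, that the hypotheses defining $\eth A$ are met, namely that $A$ and $B$ (the subgraph on the $t_j$ edges) are true subcomplexes of $X^1$ with $A\cup B = X^1$ and $A\cap B\subset X^0$; and second, that for each $2$-cell $\phi:R\to X$ the preimage $\phi^{-1}(A)$ has more than one non-trivial component. The first point is immediate from the setup, since the $a_i$ edges and the $t_j$ edges partition the $1$-skeleton of the standard $2$-complex and meet only at the single $0$-cell. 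For the second point, I would invoke the relatively staggered hypothesis: the defining condition that each $2$-cell contains a $1$-cell not in $A$ (i.e.\ a $t_j$-edge) on its boundary, combined with the fact that each word $W_i$ is a genuine word in the generators and hence, being written in the alternating form $b_{i1}^{\epsilon_{i1}}W_{i1}b_{i2}^{\epsilon_{i2}}\cdots$ of Definition~\ref{def:combinatorial injectivity}, has its $A$-portions separated by at least one $t$-occurrence. I expect the one genuinely delicate point here to be ruling out the degenerate case where $\phi^{-1}(A)$ is a single non-trivial component because there is only one maximal $A$-subpath; this is exactly where one uses that $\partial R$ is a \emph{closed} cycle containing at least one $t$-edge but, to guarantee \emph{two} non-trivial $A$-components, one needs at least two $t$-occurrences (equivalently, $s_i\ge 2$). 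This should follow from the staggering ordering hypothesis — if some $W_i$ had the form $t^{\epsilon}W_i'$ with $W_i'$ a single $A$-word, the pair $X,A$ can still be staggered but then $\eth A$ restricted to that $2$-cell has only one $A$-region; in that case one simply checks the conclusion directly, since a $2$-cell with a single $A$-arc on its boundary is trivially extreme whenever it meets $\partial D$. Alternatively, and more cleanly, I would restrict attention to the nontrivial case and note that the theorem statement is about \emph{existence} of extreme cells in every minimal-area diagram, so the single-region cells pose no obstacle.

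With the windmill established, the remaining task is to show $Z$ is \emph{splitting}, i.e.\ that $\Ess(\Gamma)\hookrightarrow X$ is $\pi_1$-injective on each component, where $\Gamma = \eth A$. As the paragraph preceding Theorem~\ref{thm:frei} explains, the inclusion $\Ess(\Gamma)\to X$ factors (up to homotopy) through $\Ess(\Gamma)\to A\subseteq X$, so it suffices to know (a) that $A\hookrightarrow X$ is $\pi_1$-injective, and (b) that $\Ess(\Gamma)\to A$ is $\pi_1$-injective. Claim (a) is precisely Theorem~\ref{thm:staggered} applied to the relatively staggered pair $X,A$. Claim (b) is exactly the hypothesis that $\eth A\to X$ is injective in the sense of Definition~\ref{def:combinatorial injectivity}/Definition~\ref{def:injective} — that is, the induced label-preserving map from $\eth A$ to the relevant bouquet of $a_i$-circles is $\pi_1$-injective on each component — together with the observation that $\Ess(\eth A)$ is a subgraph of (a quotient of) $\eth A$ obtained by deleting redundant $1$-cells, so injectivity of $\eth A\to A$ passes to $\Ess(\eth A)\to A$. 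I should be a little careful that the passage from $\eth A$ to $\Ess(\eth A)$ does not destroy $\pi_1$-injectivity: the equivalence relation defining $\Ess$ identifies $1$-cells arising from redundant $2$-cells or from the $n$-fold symmetry of a proper power, and in each case the discarded $1$-cells map to $X$ by the same path as a retained representative, so $\pi_1$-injectivity of the retained subgraph follows from that of the whole. This is the step I would flag as requiring the most care, since it is where the ``essence'' machinery does its real work and where the proper-power parameter $n$ in the relator interacts with the windmill structure.

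Once $Z = $ (the complement of the regular neighborhood coming from) $\eth A$ is shown to be a splitting windmill in $X$, Theorem~\ref{thm:extreme} applies verbatim: for every non-trivial null-homotopic immersed combinatorial path $P\to X$ and every minimal area disc diagram $\psi:D\to X$ with boundary cycle $P$, either $D$ is a single $2$-cell or $D$ has at least two $2$-cells extreme with respect to $Z$. By the definition of the phrase at the end of Section~\ref{sec:extreme}, this is exactly the assertion that \emph{disc diagrams over $X$ have extreme $2$-cells}, which is the conclusion of the theorem. So the proof is essentially a verification-of-hypotheses argument: unwind the definition of relatively staggered to get that $\eth A$ defines a windmill, cite Theorem~\ref{thm:staggered} and the injectivity hypothesis to get that it is splitting, and then quote Theorem~\ref{thm:extreme}. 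I would present it in roughly that order, keeping the windmill verification and the $\Ess$-injectivity check as the two substantive paragraphs and treating the appeal to Theorem~\ref{thm:extreme} as a one-line finish.
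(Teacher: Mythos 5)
Your proposal is correct and follows exactly the route the paper intends: the theorem is stated in the paper without a written proof, the argument being the reduction laid out at the start of \S~\ref{sec:apps} --- the $\eth A$ construction yields a windmill $Z$, Theorem~\ref{thm:staggered} gives $\pi_1$-injectivity of $A\hookrightarrow X$, the injectivity hypothesis (after homotoping $\Ess(\eth A)\to X$ into $A$ and passing from $\eth A$ to $\Ess(\eth A)$) makes $Z$ a splitting windmill, and Theorem~\ref{thm:extreme} then gives the conclusion, which is precisely your outline. The degenerate possibility you flag (a relator with fewer than two nontrivial $A$-syllables, where the windmill condition of Definition~\ref{def:eA} fails) is likewise passed over by the paper, which treats the windmill condition as one of the ``easy conditions'' noted in the definitions, so your treatment matches the paper's level of care on that point.
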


\begin{thm}\label{thm:smallcancellation analog} 
  Let $X$ be a $C(6) [C(4)-T(4)]$ small-cancellation complex, and
  suppose that $A$ is a subgraph of $X^1$ such that there does not
  exist a path $S\rightarrow A$ such that $QS$ is the attaching map of
  a $2$-cell of $X$, where $Q$ is the concatenation of at most $3$
  pieces $[$$2$ pieces] in $X$.  Then $A\rightarrow X$ is
  $\pi_1$-injective.  Consequently, if $\partial A\to A$ is injective,
  then extreme $2$-cells exist in disc diagrams over $X$.
\end{thm}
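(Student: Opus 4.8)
The plan is to verify the two separate assertions in turn, the first being the $\pi_1$-injectivity of $A\hookrightarrow X$ under the small-cancellation hypothesis, and the second being the consequence that extreme $2$-cells exist. For the first assertion I would argue by contradiction using a minimal-area disc diagram. Suppose $A\hookrightarrow X$ is not $\pi_1$-injective. Then there is a reduced closed path $P\to A$ which is essential in $A$ but null-homotopic in $X$, and we may take a minimal area disc diagram $\psi:D\to X$ with boundary cycle $P$. Since $P$ is essential in $A$ this diagram has at least one $2$-cell, so $D$ is a nontrivial minimal disc diagram over a $C(6)$ (respectively $C(4)$-$T(4)$) complex, hence the fundamental small-cancellation curvature result (Greendlinger/Lyndon-Schupp, see \cite{LySch77}) applies. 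This gives us either a spur or a $2$-cell $R$ in $D$ along whose boundary at most $3$ interior pieces occur (in the $C(4)$-$T(4)$ case at most $2$), so the portion of $\partial R$ lying on $\partial D$ is the complement $S$ of a concatenation $Q$ of at most $3$ pieces $[2$ pieces$]$. But $\partial D$ is the path $P$, which lies entirely in $A$, so $S$ is a path in $A$; thus $QS$ is (a cyclic conjugate of) the attaching map of a $2$-cell of $X$ with $Q$ a concatenation of at most $3$ pieces $[2$ pieces$]$ and $S\to A$, contradicting the hypothesis. A spur in $D$ on $\partial D=P$ would contradict that $P$ is reduced, and a spur interior to $D$ would contradict minimality; in either case we are done. (Note that I would need to make the word ``$[$$3$ pieces$]$'' bookkeeping careful but this is the routine Greendlinger's-lemma packaging, so I would not grind through it.)

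For the second assertion, with $A\hookrightarrow X$ now known to be $\pi_1$-injective and $\partial A\to A$ injective by hypothesis, the goal is to check that $Z$ (the complement of a regular neighborhood $Y$ of $A$, as in Definition~\ref{def:pA}) is a splitting windmill, so that Theorem~\ref{thm:extreme} applies and yields extreme $2$-cells in disc diagrams over $X$. There are two things to check: that $Z$ is a windmill, and that $\Ess(\Gamma)\hookrightarrow X$ is $\pi_1$-injective on each component, where $\Gamma=\partial A$. For the windmill property, Definition~\ref{def:pA} tells us $Z$ is a windmill exactly when $\phi^{-1}(A)$ is disconnected for every $2$-cell $\phi:R\to X$; this follows because the small-cancellation/no-piece hypothesis forces every $2$-cell's boundary to contain a $1$-cell outside $A$, and in fact to meet $A$ in at least two nontrivial arcs (otherwise a single arc $S\to A$ together with its complement $Q$ — which has few pieces — would again be a forbidden configuration), so $\phi^{-1}(A)$ has at least two components. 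For the $\pi_1$-injectivity of $\Ess(\Gamma)\to X$, I would use the factorization noted in the paragraph preceding Theorem~\ref{thm:frei}: the inclusion $\Ess(\Gamma)\to X$ is homotopic to a composite $\Ess(\Gamma)\to A\hookrightarrow X$, and $\Ess(\Gamma)\to A$ is (up to the equivalence that defines $\Ess$) the map $\partial A\to A$, which is $\pi_1$-injective on each component by the injectivity hypothesis; composing with the $\pi_1$-injective inclusion $A\hookrightarrow X$ gives a $\pi_1$-injective map on each component. Hence $Z$ is a splitting windmill.

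With $Z$ a splitting windmill established, Theorem~\ref{thm:extreme} immediately gives that any minimal area disc diagram over $X$ with nontrivial reduced immersed boundary cycle either is a single $2$-cell or contains at least two $2$-cells extreme with respect to $Z$; this is precisely the statement that extreme $2$-cells exist in disc diagrams over $X$, completing the proof. The main obstacle I anticipate is purely bookkeeping in the first part: correctly tracking the ``at most $3$ pieces [$2$ pieces]'' count through Greendlinger's lemma and making sure the off-boundary arc $S$ really is a path into $A$ (as opposed to something that strays onto a $1$-cell of $X^1\setminus A$) — but since $\partial D=P$ lies wholly in $A$, any subpath of $\partial R$ lying on $\partial D$ automatically lies in $A$, so this obstacle dissolves once it is stated carefully. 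The second part is essentially a matter of invoking the definitions of $\partial A$ and splitting windmill together with the two injectivity inputs.
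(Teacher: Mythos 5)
Your overall route is the same as the paper's: the first assertion is exactly Theorem~\ref{thm:small}, which the paper treats as an immediate consequence of the fundamental theorem of small cancellation theory (your Greendlinger-style minimal-area diagram argument is the standard proof of it and is fine as a sketch), and the second assertion is obtained, as in the lead-in to Section~\ref{sec:apps}, by showing $Z$ is a splitting windmill and invoking Theorem~\ref{thm:extreme}, using the factorization of $\Ess(\Gamma)\hookrightarrow X$ through $A$ together with the two injectivity inputs. That part of your argument matches the intended proof.

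There is, however, a genuine gap in your verification of the windmill condition itself. You assert that the piece hypothesis forces every $2$-cell to meet $A$ in at least two nontrivial arcs, ``otherwise a single arc $S\to A$ together with its complement $Q$ --- which has few pieces --- would again be a forbidden configuration.'' But nothing bounds the number of pieces in that complement: the hypothesis only excludes $2$-cells whose attaching map is an $A$-path $S$ preceded by a concatenation of at most $3$ pieces $[2$ pieces$]$. A $2$-cell whose boundary meets $A$ in a single nontrivial arc whose complementary path is a concatenation of, say, ten pieces (or a $2$-cell whose boundary misses $A$ altogether) is perfectly consistent with the hypothesis, and for such a cell $\phi^{-1}(A)$ is connected (or empty), so $Z$ as constructed in Definition~\ref{def:pA} is not a windmill and Theorem~\ref{thm:extreme} cannot be applied. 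The paper handles this point differently: windmill-ness (that $\phi^{-1}(A)$ is disconnected for every $2$-cell) is treated as a separate, easily checked condition on $A$ (see Definition~\ref{def:pA} and the opening discussion of Section~\ref{sec:apps}), not as a consequence of the small-cancellation hypothesis, and the theorem's conclusion is to be read with that condition in force. So either add that disconnectedness assumption explicitly, or supply a genuinely different argument; the parenthetical justification you give does not work.
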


In both cases, the restricted nature of the extreme $2$-cells,
combined with Theorem~7.6 of \cite{McCammondWiseCoherence}, leads to
new tests for coherence.

\subsection{$\eth A$ applications}

The following theorem is merely the conclusion of
Theorem~\ref{thm:extreme} translated into a more group theoretic
language.

\begin{thm}\label{thm:extreme-cell}
  Let $W = A_1 B_1 A_2 B_2 \ldots A_k B_k$ be a word where the $A_i$
  are non-empty words using generators in $A$ and the $B_i$ are
  non-empty words using generators from $B$ (disjoint from $A$), and
  let $G$ be the one-relator group $G = \langle A \cup B\mid W^n
  \rangle$.  If $\Ess(\eth A)\to A$ is $\pi_1$-injective and $P$ is a
  cyclically reduced word representing the trivial element in $G$,
  then there are words $Q$ and $S$ such that $Q$ is a subword of $P$,
  $QS$ is a cyclic conjugate of $W^{\pm n}$ and $S$ is a subword of
  $B_{i-1}A_iB_i$ for some $i$ where the subscripts are considered
  $\mod k$.
\end{thm}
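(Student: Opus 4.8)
The plan is to deduce Theorem~\ref{thm:extreme-cell} from Theorem~\ref{thm:extreme} by first translating the group-theoretic data into the topological setting of windmills, and then reading the conclusion back into words. First I would let $X$ be the standard $2$-complex of $\langle A\cup B\mid W^n\rangle$, so that $X^1$ is the bouquet of circles labeled by $A\cup B$ and $X$ has a single $2$-cell $R$ whose boundary reads $W^n$. I would set the two true subcomplexes of $X^1$ to be the bouquet of $A$-circles and the bouquet of $B$-circles; these satisfy $(A\cup B)=X^1$ and $(A\cap B)\subseteq X^0$ (the single $0$-cell). Then $\eth A$, together with its extensions $Y$ and $Z$, is defined exactly as in Definition~\ref{def:eA}. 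The word $W=A_1B_1\ldots A_kB_k$ written as a product alternating between $A$-syllables and $B$-syllables shows that $\phi^{-1}(A)$ has $k$ non-trivial components inside $R$ (one for each $A_i$, repeated $n$ times around the boundary if one does not use the periodicity, but in any case more than one whenever $k\geq 1$ and $n\geq 1$ — and certainly more than one since $k\geq 1$ forces at least $k\geq 1$ copies, and the alternation with $B$ guarantees they are genuinely separated). Hence $Z$ is a windmill by Definition~\ref{def:eA}. The hypothesis that $\Ess(\eth A)\to A$ is $\pi_1$-injective, combined with the fact (noted in \S~\ref{sec:apps}) that the inclusion $A\hookrightarrow X$ is $\pi_1$-injective by the Freiheitssatz (Theorem~\ref{thm:frei}, applicable since $W$ contains a $B$-letter and hence $R$ omits an $A$-edge — wait, one needs $R$ to omit a $1$-cell, which it does since $A$ is a proper subgraph), shows that $\Ess(\Gamma)\hookrightarrow X$ factors through $\Ess(\eth A)\to A\hookrightarrow X$ and is therefore $\pi_1$-injective. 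So $Z$ is a splitting windmill.

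Next I would apply Theorem~\ref{thm:extreme}. Given a cyclically reduced word $P$ representing $1$ in $G$, I take $P\to X$ to be the corresponding immersed combinatorial loop (it is immersed since $P$ is reduced, and non-trivial since $P$ represents a non-trivial relator-consequence — if $P$ is empty there is nothing to prove) and let $\psi:D\to X$ be a minimal area disc diagram with boundary cycle $P$. Theorem~\ref{thm:extreme} gives that $D$ is a single $2$-cell, or $D$ has at least two $2$-cells extreme with respect to $Z$; in either case there is at least one extreme $2$-cell $R'$ in $D$. By Definition~\ref{def:extreme}, $\partial R'$ is a concatenation $QS$ where $Q$ is a subpath of $\partial D=P$ and $S\cap\psi^{-1}(Z)$ has at most one non-trivial component. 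Since $\psi(R')$ is the single $2$-cell $R$ of $X$ read with some exponent, $\partial R'$ maps to a cyclic conjugate of $W^{\pm n}$, so $QS$ is a cyclic conjugate of $W^{\pm n}$ and $Q$ is a subword of $P$, giving two of the three required conclusions.

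The remaining point — and the one I expect to require the most care — is translating ``$S\cap\psi^{-1}(Z)$ has at most one non-trivial component'' into ``$S$ is a subword of $B_{i-1}A_iB_i$ for some $i$ (indices mod $k$)''. Here I would argue on the level of the boundary word of $R$. Reading around $\partial R$, the preimage $\phi^{-1}(Z\setminus\Gamma)$ restricted to the boundary consists of the $B$-syllables $B_1,\ldots,B_k$ (repeated around the $n$ copies of $W$), while the $A$-syllables lie in $\phi^{-1}(Y)$; the points of $\phi^{-1}(\Gamma)$ on $\partial R$ are the syllable junctions. A subpath $S$ of $\partial R$ whose intersection with $\psi^{-1}(Z)$ meets at most one non-trivial $Z$-component can contain at most one full interior $B$-syllable: if it contained parts of two distinct $B$-syllables $B_j$ and $B_{j'}$ together with the whole $A$-syllable between them, those two $B$-pieces would lie in different components of $Z'$ (they are separated in $R$ by the windmill structure, since $\phi^{-1}(Z\setminus\Gamma)$ inside $R$ has its components cut apart along the $\eth A$-edges), forcing two non-trivial components in $S\cap\psi^{-1}(Z)$ unless $S$ already fails to separate them. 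Working this out, $S$ can extend at most across a single $B$-syllable $B_i$, picking up on each side a (possibly partial) adjacent $A$-syllable and then into the neighboring $B$-syllables only as far as does not create a second $Z$-component — which, after accounting for how the windmill in $R$ glues the $B_{i-1}$, $A_i$, $B_i$ stretch into one local region, yields exactly that $S$ is a subword of the cyclic segment $B_{i-1}A_iB_i$. I would make this precise by examining which sub-arcs of $\partial R$ have the property that all their $Z$-points lie in a single component of $\phi^{-1}(Z\setminus\Gamma)\subset R$, using the explicit windmill picture from Definition~\ref{def:eA}, and then noting that $S$ being a subpath of $\partial R$ with $S\cap\psi^{-1}(Z)\subseteq$ one component of $Z'$ forces $S\subseteq\phi(\text{that arc})$. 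The bookkeeping of where the endpoints of $S$ can sit relative to the $A_i/B_i$ breakdown — and the mod-$k$ wraparound when $R$ is traversed with exponent $>1$ — is the genuinely fiddly part; everything else is a direct unwinding of the definitions and a single invocation of Theorem~\ref{thm:extreme}.
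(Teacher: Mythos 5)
Your overall route is exactly the one the paper intends: the paper gives no separate argument for Theorem~\ref{thm:extreme-cell} beyond the remark that it is Theorem~\ref{thm:extreme} ``translated into group theoretic language,'' and your setup (standard $2$-complex, the $\eth A$ construction of Definition~\ref{def:eA}, splitting via the hypothesis on $\Ess(\eth A)\to A$ together with the Freiheitssatz, then one application of Theorem~\ref{thm:extreme}) is that translation. The problem is the final step, which you yourself flag as the fiddly part and then assert rather than prove --- and as sketched it does not give the stated conclusion. In the $\eth A$ construction the windmill $Z$ is the side extending $B$: inside the $2$-cell the chords of $\eth A$ parallel the $A$-syllables, the lens regions containing the $A_i$ lie in $Y$, and the central region together with the $B$-edges lies in $Z$. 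Definition~\ref{def:extreme} counts components of the subspace $S\cap\psi^{-1}(Z)$ of the arc $S$ (isolated points ignored), and on $\partial R$ these nontrivial components are exactly the maximal runs of $B$-letters of $S$. So ``extreme with respect to $Z$'' translates to: the $B$-letters of $S$ form a single block, lying in one occurrence of some $B_i$; that is, $S$ is a subword of $A_iB_iA_{i+1}$. This is a genuinely different condition from the theorem's ``$S$ is a subword of $B_{i-1}A_iB_i$'': neither implies the other (a subword of $B_{i-1}A_iB_i$ may contain two $B$-blocks, while $A_iB_iA_{i+1}$ contains two full $A$-syllables). Your justification --- that the windmill ``glues the $B_{i-1}$, $A_i$, $B_i$ stretch into one local region'' --- conflates connectivity of $\psi^{-1}(Z)$ inside the $2$-cell (or of $Z$ in $X$) with the number of components of $S\cap\psi^{-1}(Z)$ as a subspace of the boundary arc $S$, which is what the definition of extreme actually measures; the analogous translation in Theorem~\ref{thm:1rel-extreme} (``$b^{\pm1}$ occurs at most once in $S$'', i.e.\ at most one $Z$-block) confirms this reading.

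Concretely, to obtain the $B_{i-1}A_iB_i$ form you would need the windmill whose chords parallel the $B$-syllables, i.e.\ the $\eth$-construction with the roles of $A$ and $B$ exchanged so that the windmill extends $A$ --- but then the splitting hypothesis would concern $\Ess(\eth B)\to B$ (freeness conditions on the $B_i$), not the hypothesized $\Ess(\eth A)\to A$. So your proposal as written proves, under the stated hypothesis, the ``at most one $B$-block'' conclusion ($S\subseteq A_iB_iA_{i+1}$), not the conclusion as stated; resolving which set plays the windmill role, and adjusting either the hypothesis or the form of $S$ accordingly, is precisely the content the hand-waved step needed to supply. A second, minor point: Definition~\ref{def:windmill} requires at least two separating $1$-cells per $2$-cell, so the construction yields a windmill only when $nk\geq 2$; your parenthetical asserting this holds ``certainly'' for $k\geq 1$, $n\geq 1$ is wrong in the case $k=n=1$, which should be treated separately (or excluded).
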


When $k$ is at least~$2$ then this theorem gives a refinement of the
B.B.~Newman spelling theorem in the sense that it further restricts
the size of the possible complements $S$.  As with the spelling
theorem, this leads immediately to a corresponding weight test.  We
refer the reader to \cite{McCammondWiseCoherence} for the definition
of $\perimeter(A_i)$ and $\weight(W^n)$.

\begin{cor}\label{cor:AB-case}
  Let $G = \langle A \cup B\mid W^n \rangle$ be a one relator group
  with torsion where $A$ and $B$ are disjoint sets of generators and
  $W$ has the form $A_1 B_1 A_2 B_2 \ldots A_k B_k$ for some non-empty
  words $A_i$ and $B_i$ using generators from $A$ and $B$
  respectively.  If $\Ess(\eth A)\to A$ is $\pi_1$-injective and
  $\perimeter(A_i) \leq \weight(W^n)$ for all $i$, then $G$ is
  coherent.
\end{cor}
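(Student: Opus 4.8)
The plan is to deduce Corollary~\ref{cor:AB-case} from Theorem~\ref{thm:extreme-cell} by feeding the spelling-type conclusion into the perimeter-reduction machinery of \cite[Thm~7.6]{McCammondWiseCoherence}, exactly as in the proof of Theorem~\ref{thm:1rel-extreme}. First I would observe that $G$ has torsion, so $n \geq 2$ and the standard $2$-complex $X$ of the presentation $\langle A\cup B\mid W^n\rangle$ has a single $2$-cell $R$ attached by the proper power $W^n$. Since $\Ess(\eth A)\to A$ is $\pi_1$-injective by hypothesis, Theorem~\ref{thm:extreme-cell} applies: any cyclically reduced word $P$ representing the trivial element contains a subword $Q$ with $QS$ a cyclic conjugate of $W^{\pm n}$ and $S$ a subword of $B_{i-1}A_iB_i$ for some $i$ (indices mod $k$). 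In particular the ``complement'' $S$ that $Q$ fails to cover in the relator is contained in a single block $B_{i-1}A_iB_i$.

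Next I would assign weights to the sides of $R$: give each side of $R$ lying over a $1$-cell of $A$ the weight $0$ and each side lying over a $1$-cell of $B$ the weight $1$ (strictly, one works with the boundary cycle $W^n$ of $R$ and weights the $n|W|$ sides according to whether the corresponding letter lies in $A$ or in $B$). With this weighting the weighted length of the complement $S$ is at most the number of $B$-letters in $B_{i-1}A_iB_i$, which is $|B_{i-1}|+|B_i|$, while the weighted perimeter contribution $\weight(W^n)$ counts, with multiplicity $n$, the $B$-letters of $W$. The hypothesis $\perimeter(A_i)\leq\weight(W^n)$ for all $i$ is precisely the inequality needed to verify the $\le$ condition in the perimeter-reduction hypothesis of \cite[Thm~7.6]{McCammondWiseCoherence}: it guarantees that whenever a disc diagram forces an extreme $2$-cell, adjoining that $2$-cell to a subcomplex $Y$ strictly decreases the weighted perimeter, because the weight gained along $S$ (which lies in $Y$'s complement) is dominated by the weight lost along $Q$ (which lies in $Y$). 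Here I would cite the discussion in \S\ref{sec:perimeter} connecting extreme $2$-cells to the termination of the enlargement algorithm, and match notation with \cite{McCammondWiseCoherence} for $\perimeter(A_i)$ and $\weight(W^n)$.

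Finally, with the $\le$ condition verified, \cite[Thm~7.6]{McCammondWiseCoherence} yields that $\pi_1 X = G$ is coherent, completing the proof. The main obstacle I anticipate is bookkeeping rather than conceptual: one must check that the quantity the Theorem~7.6 hypothesis calls $\perimeter(A_i)$ really does bound the weighted length of \emph{every} admissible complement $S$ produced by Theorem~\ref{thm:extreme-cell}, and that $\weight(W^n)$ is the correct weighted count of the portion $Q$ contributes — i.e.\ that the indexing convention ($S$ a subword of $B_{i-1}A_iB_i$, indices mod $k$, versus $\perimeter(A_i)$ ranging over all $i$) lines up so the inequality covers all cases. This is where one has to be careful with the torsion hypothesis ($n\ge 2$), since it is what makes $W^n$ genuinely a proper power and lets the essence $\Ess(\eth A)$ (rather than all of $\eth A$) be the relevant $\pi_1$-injective object; for $n=1$ the spelling-theorem conclusion and hence the coherence argument would have a different flavor.
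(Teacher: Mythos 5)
Your overall strategy---feed Theorem~\ref{thm:extreme-cell} into the perimeter-reduction machinery of \cite[Thm~7.6]{McCammondWiseCoherence} via a $0/1$ weighting of the sides of the unique $2$-cell $R$---is exactly the paper's, but you have the weighting backwards, and with it the verification does not go through. The paper assigns weight $1$ to the sides labeled by letters of $A$ and weight $0$ to the sides labeled by letters of $B$; you propose the opposite. The quantity the perimeter-reduction hypothesis asks you to bound is not the weighted length of the complement $S$ (the weight of the sides of $R$ lying along $S$) but the \emph{perimeter} of $S$, i.e.\ the weighted number of sides of $2$-cells attached to the $1$-cells that $S$ traverses, which here counts occurrences of those letters throughout all of $W^n$. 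Since Theorem~\ref{thm:extreme-cell} lets $S$ contain the two full blocks $B_{i-1}$ and $B_i$ but only $A$-letters from the single block $A_i$, the way to control $\perimeter(S)$ is to make the $B$-edges perimeter-free: with the paper's weighting one gets $\perimeter(S)\le \perimeter(B_{i-1})+\perimeter(A_i)+\perimeter(B_i)=\perimeter(A_i)$, and the hypothesis $\perimeter(A_i)\le\weight(W^n)$ is then precisely the $\le$ condition of \cite[Thm~7.6]{McCammondWiseCoherence}. This parallels Theorem~\ref{thm:1rel-extreme}, where the weight $1$ goes on the sides at $b$ exactly because the spelling conclusion limits how often $b$ occurs in the complement.

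With your reversed weighting two things go wrong. First, every side attached to an $A$-edge has weight $0$, so $\perimeter(A_i)=0$ for all $i$ and the corollary's hypothesis becomes vacuous; indeed your argument never genuinely uses it, which is a warning sign, since under the intended weighting $\perimeter(A_i)$ can exceed $\weight(W^n)$ (for instance when a letter of $A_i$ recurs many times in $W$), so the hypothesis is a real restriction that the proof must consume. Second, the inequality you check---that the number of $B$-letters along $S$ is at most $|B_{i-1}|+|B_i|$, which is dominated by the $n$-fold count of $B$-letters of $W$---compares the wrong quantities: it bounds the weight of $S$'s own sides, not $\perimeter(S)$. Under your weighting $\perimeter(S)$ is governed by the total multiplicities in $W^n$ of the $B$-letters occurring in $B_{i-1}$ and $B_i$, which no hypothesis of the corollary bounds; e.g.\ if $B_{i-1}=B_i=b$ then $S$ may traverse the edge $b$ twice and $\perimeter(S)$ can reach twice the number of occurrences of $b$ in $W^n$, exceeding your $\weight(W^n)$. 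Switching to the paper's weighting and replacing ``weighted length of $S$'' by ``$\perimeter(S)$'' throughout repairs the argument; the rest of your outline (torsion giving $n\ge 2$, $\Ess(\eth A)\to A$ injective feeding Theorem~\ref{thm:extreme-cell}, then \cite[Thm~7.6]{McCammondWiseCoherence}) matches the paper.
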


\begin{proof}
  Let $X$ be the standard $2$-complex of the presentation.  Assign a
  weight of $1$ to each side labeled by an element of $A$ and a weight
  of $0$ to each side labeled by an element of $B$.  By
  Theorem~\ref{thm:extreme-cell}, the Perimeter Reduction Hypothesis
  of \cite[Thm~7.6]{McCammondWiseCoherence} is satisfied and so
  $G\cong\pi_1X$ is coherent.
\end{proof}

The most important Corollary, and the easiest to apply, is the
following.

\begin{cor}\label{cor:t-windmills}
  Let $G = \langle A,t\mid W^N \rangle$ where $W$ has the form
  $t^{\epsilon_1}W_1 t^{\epsilon_2} W_2 \ldots t^{\epsilon_k}W_k$ and
  for each $i$, $\epsilon_i$ is an integer and $W_i$ is a reduced word
  over $A$.  If $\Ess(\eth A)\to A$ is injective, then $G$ is
  coherent.
\end{cor}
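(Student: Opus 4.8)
The plan is to derive Corollary~\ref{cor:t-windmills} from Corollary~\ref{cor:AB-case} by passing through the generator graph comparison and verifying the perimeter inequality automatically in this special case. First I would observe that the case $k=1$ (i.e.\ $W = t^{\epsilon_1}W_1$ with a single $t$-syllable) can be handled separately, since then $G$ is either a one-relator group in which $t$ has a particularly simple form or reduces to a free group with torsion by a Tietze transformation eliminating $t$; in this degenerate case coherence is classical. So I would assume $k\geq 2$, which is exactly the regime in which Corollary~\ref{cor:AB-case} applies with $B = \{t\}$.

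Next I would set up the translation. With $B=\{t\}$, the word $W = t^{\epsilon_1}W_1 \cdots t^{\epsilon_k}W_k$ has the form $B_1 A_1 B_2 A_2 \cdots B_k A_k$ after a cyclic conjugation (grouping each $t^{\epsilon_i}$ as $B_i$ and each $W_i$ as $A_i$), which is a cyclic conjugate of the shape $A_1B_1\cdots A_kB_k$ required by Corollary~\ref{cor:AB-case}; since the hypotheses and conclusion are invariant under cyclic conjugation of $W$, this is harmless. The notion "$\Ess(\eth A)\to A$ is injective" matches the hypothesis of Corollary~\ref{cor:AB-case} once one checks that $\eth A$ (in the sense of Definition~\ref{def:eA}) and the generator graph $\eth t(W) = \eth A(W)$ of Definition~\ref{def:injective} are homotopy equivalent over $A$; this is essentially the content of the combinatorial description in \S4.1, since both are built from the $W$-polygon by identifying $t$-edges and deleting their interiors. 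The exponent $N$ of $W$ simply means the $2$-cell $R$ is attached by a proper power, which is why one must use $\Ess(\eth A)$ rather than $\eth A$ itself — the $N$-fold symmetry of $R$ collapses $\eth A\cap R$ down to $\frac1N$ of itself, but the injectivity hypothesis is stated at the level of the essence so it transfers directly.

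Then comes the perimeter bookkeeping. Assigning weight $1$ to every side of $R$ labeled by a letter of $A$ and weight $0$ to every side labeled $t^{\pm1}$, I would compute $\perimeter(A_i)$ and $\weight(W^N)$. Each $A_i = W_i$ is a reduced word in $A$, so $\perimeter(W_i)$ is essentially $2|W_i|$ (two missing sides per interior edge of the path $W_i$, roughly), while $\weight(W^N) = N\sum_j |W_j|$ counts every $A$-labeled side of the whole boundary cycle $W^N$. Since $k\geq 2$ and $N\geq 1$, we have $N\sum_j|W_j| \geq \sum_j |W_j| \geq |W_i| + |W_{i'}|$ for some other index $i'$, and comparing constants one checks $\perimeter(W_i)\leq \weight(W^N)$ for every $i$. (The precise constants depend on the conventions of \cite{McCammondWiseCoherence}, but the point is that the right-hand side sums over \emph{all} $A$-syllables of a power of $W$ while the left-hand side sees only one syllable, so for $k\geq 2$ there is always slack.) With that inequality verified, Corollary~\ref{cor:AB-case} applies verbatim and yields coherence of $G$.

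The main obstacle I expect is not any deep argument but rather the careful identification of the two a priori different objects called "$\eth A$" — the windmill-separating graph of Definition~\ref{def:eA} and the generator graph $\eth t(W)$ of Definition~\ref{def:injective} — together with checking that "$\Ess$" of the former agrees, up to homotopy over $A$, with the latter after accounting for the proper-power symmetry. Once that bookkeeping is pinned down, and once the degenerate $k=1$ case is dispatched, the perimeter inequality is a one-line estimate and the conclusion is immediate from Corollary~\ref{cor:AB-case}.
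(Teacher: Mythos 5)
Your overall route --- reduce to Corollary~\ref{cor:AB-case} by taking $B=\{t\}$, $A_i=W_i$, $B_i=t^{\epsilon_i}$ --- is the same one the paper uses, but the step where you verify the perimeter hypothesis is where the argument breaks, and it breaks because you put the weights on the wrong letters. With weight $1$ on the $A$-sides and $0$ on the $t$-sides, the perimeter of an edge $a\in A$ is not $2$: it is the number of occurrences of $a^{\pm 1}$ in $W^N$, so $\perimeter(W_i)$ is not ``essentially $2|W_i|$.'' For example, for $W=ta^2ta^{-2}$ (so $k=2$, $N=1$) one gets $\perimeter(a)=4$, hence $\perimeter(W_1)=\perimeter(a^2)=8$, while $\weight(W)=4$; the claimed inequality $\perimeter(W_i)\le\weight(W^N)$ fails, and no ``slack from $k\ge 2$'' rescues it. Indeed, if that inequality were automatic, the hypothesis $\perimeter(A_i)\le\weight(W^n)$ in Corollary~\ref{cor:AB-case} would be vacuous --- the whole reason that corollary carries a perimeter hypothesis and Corollary~\ref{cor:t-windmills} does not is that the inequality is \emph{not} automatic in your direction. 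The paper's proof is the opposite bookkeeping, exactly as in Theorem~\ref{thm:1rel-extreme}: assign weight $1$ to the sides at $t$ and weight $0$ to the sides at letters of $A$. Then the single $t$-edge satisfies $\perimeter(t)=\weight(W^N)$ on the nose, and since the complement $S$ of an extreme $2$-cell contributes as new edges at most that one $t$-edge together with $A$-edges of perimeter zero, the $\le$ condition of Theorem~7.6 of \cite{McCammondWiseCoherence} holds with no condition whatsoever on the $W_i$. Concentrating all the weight on the unique stable letter, so that the required inequality becomes an identity, is the entire content of the paper's one-line proof.

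Two smaller points. First, your dispatch of $k=1$ is both unnecessary in most cases and unjustified as stated: for $k=1$ and $N\ge 2$ the relator $W^N$ still has $N\ge 2$ maximal $A$-syllables, so the $\eth A$ windmill machinery applies directly; and the claim that $\langle A,t\mid (t^{\epsilon}W_1)^N\rangle$ is ``classically'' coherent is dubious when $|\epsilon|\ge 2$ and $N\ge 2$ (it is then a genuine one-relator group with torsion). Only $k=1$, $N=1$ genuinely escapes the windmill construction, and there the group is an amalgam of free groups over a cyclic subgroup. Second, your identification of the windmill graph $\eth A$ of Definition~\ref{def:eA} with the generator graph of Definition~\ref{def:injective}, and the role of $\Ess$ in absorbing the $N$-fold symmetry of the proper power, is fine and is indeed the intended translation; the gap is solely in the perimeter estimate.
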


\begin{proof}
  Since $\perimeter(t)=\weight(W)$, Corollary~\ref{cor:AB-case}
  applies.
\end{proof}

\bibliographystyle{plain}
\def\cprime{$'$}

\end{document}